\theoremstyle{plain}
\newtheorem{theorem}{Theorem}[section]	
\newtheorem{lemma}[theorem]{Lemma}	
\newtheorem{corollary}[theorem]{Corollary}
\newtheorem{proposition}[theorem]{Proposition}
\theoremstyle{definition}
\newtheorem{definition}[theorem]{Definition}
\newtheorem{example}[theorem]{Example}
\theoremstyle{remark}
\newtheorem{remark}[theorem]{Remark}
\numberwithin{equation}{section}	
\DeclareMathOperator{\co}{co}
\DeclareMathOperator{\cone}{cone}
\DeclareMathOperator{\linhull}{span}
\DeclareMathOperator{\extreme}{ext}
\DeclareMathOperator{\cl}{cl}
\author{Dolgopolik M.V.\footnote{Institute for Problems in Mechanical Engineering, Saint Petersburg, Russia}}
\title{A New Constraint Qualification and Sharp Optimality Conditions for Nonsmooth Mathematical Programming
Problems in Terms of Quasidifferentials}
\begin{document}

\maketitle

\begin{abstract}
The paper is devoted to an analysis of a new constraint qualification and a derivation of the strongest existing
optimality conditions for nonsmooth mathematical programming problems with equality and inequality constraints in terms
of Demyanov-Rubinov-Polyakova quasi\-differentials under the minimal possible assumptions. To this end, we obtain a
novel description of convex subcones of the contingent cone to a set defined by quasidifferentiable equality and
inequality constraints with the use of a new constraint qualification. We utilize these description and constraint
qualification to derive the strongest existing optimality conditions for nonsmooth mathematical programming problems in
terms of quasidifferentials under less restrictive assumptions than in previous studies. The main feature of the new
constraint qualification and related optimality conditions is the fact that they depend on individual elements of 
quasidifferentials of the objective function and constraints and are not invariant with respect to the choise of
quasidifferentials. To illustrate the theoretical results, we present two simple examples in which optimality conditions
in terms of various subdifferentials (in fact, any outer semicontinuous/limiting subdifferential) are satisfied at a
nonoptimal point, while the optimality conditions obtained in this paper do not hold true at this point, that is,
optimality conditions in terms of quasidifferentials, unlike the ones in terms of subdifferentials, detect the 
nonoptimality of this point.
\end{abstract}

\section{Introduction}

A class of nonsmooth quasidifferentiable functions was introduced by Demyanov, Rubinov, and Polyakova in the late
1970s \cite{DemRubPol79,DemyanovRubinov80}. Since then, several collections of papers
\cite{DemyanovDixon,Quasidifferentiability_book} and monographs
\cite{DemVas_book,DemRub_book,QuasidiffMechanics} were devoted to quasidifferential calculus and its applications in
the finite dimensional case. Infinite dimensional extensions of quasidifferential calculus were analysed in
\cite{DemRub83,KusraevKutateladze,PalRecUrb,Uderzo,Dolgopolik_AbstrConvApprox,BasaevaKusraevKutateladze}. 
A generalization of the concept of quasidifferentiability called $\varepsilon$-\textit{quasidifferentiability} was
proposed by Gorokhovik \cite{Gorokhovik82,Gorokhovik84,Gorokhovik86,Gorokhovik_book}. Another generalized concept of
quasidifferentiability was introduced by Ishizuka \cite{Ishizuka}.

Necessary conditions for an unconstrained local minimum in terms of quasidifferentials were first obtained by Polyakova
\cite{Polyakova81}. In \cite{DemyanovPolyakova80}, Demyanov and Polyakova studied optimality conditions in terms of
quasidifferentials for the problem
\begin{equation} \label{InequalityConstrainedProblem}
  \min \: f_0(x) \quad \text{subject to} \quad g(x) \le 0.
\end{equation}
Note that problems with several inequality constraints $g_i(x) \le 0$ can be easily reduced to the case of a single
constraint by setting $g(x) = \max_{i} g_i(x)$. 

As is well-known (see, e.g. \cite[Example~1]{LudererRosiger90}), optimality conditions for quasidifferentiable
programming problems cannot be formulated in the traditional way involving the Lagrangian, which results in
the fact that optimality conditions for such problems can be stated in several non-equivalent forms. Optimality
conditions for problem \eqref{InequalityConstrainedProblem} from \cite{DemyanovPolyakova80} were formulated in geometric
terms and involved some cones generated by a quasidifferential of the constraint. Optimality conditions for problem
\eqref{InequalityConstrainedProblem} similar to Fritz John and KKT conditions in which Lagrange multipliers depend on
individual elements of quasidifferentials were studied in \cite{Shapiro86,LudererRosiger90,KuntzScholtes}. Fritz
John-type optimality conditions for problem \eqref{InequalityConstrainedProblem} were derived by Sutti \cite{Sutti}.
Some connections between KKT form and geometric form of optimality conditions for problem
\eqref{InequalityConstrainedProblem} were pointed out by Dinh et. al \cite{DinhTuan2003,DinLeeTuan2005}. Uderzo
\cite{Uderzo2002} obtained optimality conditions for problem \eqref{InequalityConstrainedProblem} in terms of a
quasidifferential of the nonlinear Lagrangian $L(x) = p(f_0(x), g(x))$, where $p$ is an (unknown) sublinear
function. Finally, various constraint qualifications for problem \eqref{InequalityConstrainedProblem} were discussed in
\cite{Ward91,KuntzScholtes92,KuntzScholtes}, while independence of constraint qualifications and optimality conditions
for problem \eqref{InequalityConstrainedProblem} on the choice of quasidifferentials (recall that a quasidifferential is
not uniquely defined) was analyzed in \cite{LudererRosiger91,Luderer92}.

A geometric form of optimality conditions in terms of quasidifferentials for problems with a single \textit{equality}
and no inequality constraints was obtained by Polyakova \cite{Polyakova86}. Optimality conditions from
\cite{Polyakova86} were further analyzed by Wang and Mortensen in \cite{WangMortensen}, where some results on
independence of optimality conditions on the choice of quasidifferentials were presented as well. Similar optimality
conditions for problems with constraints of the form $F(x) = 0$ or $F(x) \le 0$, where $F$ is a so-called
\textit{scalarly quasidifferentiable} mapping between infinite dimensional spaces, were derived by Glover et al.
\cite{Glover92,GloverJeyakumarOettli94} and Uderzo \cite{Uderzo2,Uderzo2007}.

Optimality conditions in terms of quasidifferentials for nonsmooth mathematical programming problems with equality,
inequality, and nonfunctional constraints were first studied by Shapiro \cite{Shapiro84,Shapiro86}. These conditions
were formulated in terms of a quasidifferential of the $\ell_1$ penalty function. Optimality conditions for nonsmooth
mathematical programming problems involving quasidifferentials of the objective function and inequality constraints, and
the Clarke subdifferentials of the equality constraints were derived by Gao \cite{Gao2000_Clarke}. KKT optimality
conditions for such problems involving the Demyanov difference of quasidifferentials were studied in
\cite{Gao2000,Gao2002,XiaSongZhang2005,SongXiaZhang2006}. However, it is very hard to compute the Demyanov difference of
a quasidifferential in nontrivial cases, which makes such conditions less appealing for applications, than optimality
conditions in terms of quasidifferentials. To the best of author's knowledge, first KKT-type optimality conditions in
terms of quasidifferentials for nonsmooth mathematical programming problems with equality and inequality constraints
were obtained in the recent paper \cite{Dolgopolik_MetricReg} with the use of a Mangasarian-Fromovitz-type constraint
qualification in terms of quasidifferentials.

Finally, the problem of when necessary optimality conditions for quasidifferentiable problems become sufficient ones was
analyzed in \cite{Glover92,Antczak2017} under generalized invexity assumptions, while optimality conditions for vector
quasidifferentiable optimization problems were studied by Glover et al. \cite{GloverJeyakumarOettli94}, 
Basaeva \cite{Basaeva2004,Basaeva2008} (see also \cite{KusraevKutateladze,BasaevaKusraevKutateladze}), and 
Antczak \cite{Antczak2016}.

The main goal of this article is to obtain a convenient description of convex subcones of the contingent cone to a set
defined by quasidifferentiable equality and inequality constraints and give a new perspective on constraint 
qualifications and optimality conditions for nonsmooth mathematical programming in terms of quasidifferentials. Unlike
all existing results, we aim at obtaining conditions that depend on individual elements of quasidifferentials and
\textit{might not be satisfied for some of them}. Such conditions provide additional flexibility that allows one to
obtain much sharper results than the use of quasidifferentials as a whole. To this end, being inspired by the papers of
Di et al. \cite{Di1,Di2} on a derivation of the classical KKT optimality conditions under weaker assumptions, we present
a completely new description of convex subcones of the contingent cone to a set defined by quasidifferentiable equality
and inequality constraints. This description leads to a new natural constraint qualification for nonsmooth mathematical
programming problems in terms of quasidifferentials that we utilize to derive the strongest existing optimality
conditions for such problems under less restrictive assumptions than in all previous studies on quasidifferentiable
programming problems. See Remark~\ref{remark:InEqConstr_Diff} and Section~\ref{Section_CQ_Comparison}
for a detailed comparison of our assumptions with the assumptions used in previous studies. See also
\cite{Dolgopolik_ConstrainedCalcVar} for applications of the main results of this paper to constrained nonsmooth
problems of the calculus of variations.

To illustrate our theoretical results, we present an example with a degenerate constraint in which all existing
constraint qualifications for quasidifferentiable programming problems fail, while our constraint qualification holds
true. Moreover, we demonstrate that in some cases optimality conditions in terms of quasidifferentials are better than
optimality conditions in terms of various subdifferentials. Namely, we give two examples in which optimality conditions
in terms of the Clarke subdifferential \cite[Theorem~6.1.1]{Clarke}, the Michel-Penot subdifferential \cite{Ioffe93},
the approximate (Ioffe) subdifferential \cite[Proposition~12]{Ioffe84}, the basic Mordukhovich subdifferential
\cite[Theorem~5.19]{Mordukhovich_II}, and the Jeyakumar-Luc subdifferential \cite[Corollary~3.4]{WangJeyakumar} (in
fact, any outer semicontinuous/limiting subdifferential; see, e.g. \cite{Ioffe2012,Penot_book}) are satisfied at a
nonoptimal point, while optimality conditions in terms of quasidifferentials do \textit{not} hold true at this point.
Thus, quasidifferential-based optimality conditions in some cases detect the nonoptimality of a given point, when
subdifferential-based conditions fail to do~so.

The paper is organized as follows. A description of convex subcones of the contingent cone to a set defined by
quasidifferentiable equality and inequality constraints, as well as related constraint qualifications, are presented in
Section~\ref{Section_ContingentCones}. In Section~\ref{Section_OptimalityConditions}, this description is utilized to
obtain the strongest existing necessary optimality conditions for nonsmooth mathematical programming problems in terms
of quasidifferential under less restrictive assumptions than in previous studies. In this section we also present two
examples demonstrating that optimality conditions in terms of quasidifferentials are sometimes better than optimality
conditions in terms of various subdifferentials. A comparison between assumptions and constraint qualifications used
in this paper and in previous studies is presented in Section~\ref{Section_CQ_Comparison}. Finally, for the sake of
completeness, some basic definitions from quasidifferential calculus are collected in
Section~\ref{Section_Preliminaries}.

\section{Quasidifferentiable Functions}
\label{Section_Preliminaries}

From this point onwards, let $X$ be a real Banach space. Its topological dual space is denoted by $X^*$, whereas the
canonical duality pairing between $X$ and $X^*$ is denoted by $\langle \cdot, \cdot \rangle$. Finally, denote by
$\cl^*$ the closure in the weak${}^*$ topology.

Let $U \subset X$ be an open set. Recall that a function $f \colon U \to \mathbb{R}$ is called directionally
differentiable (d.d.) at a point $x \in U$ iff for any $v \in X$ there exists the finite limit
$$
  f'(x, v) = \lim_{\alpha \to +0} \frac{f(x + \alpha v) - f(x)}{\alpha}.
$$
We say that $f$ is d.d. at $x$ \textit{uniformly along finite dimensional spaces} iff $f$ is d.d. at this point and for
any $v \in X$ and finite dimensional subspace $X_0 \subset X$ one has
$$
  f'(x, v) = \lim_{[\alpha, v'] \to [+0, v], v' \in v + X_0} \frac{f(x + \alpha v') - f(x)}{\alpha},
$$
i.e. for any $\varepsilon > 0$ there exists $\delta > 0$ such that for all $\alpha > 0$ and $v' \in v + X_0$ with
$\alpha < \delta$ and $\| v' - v \| < \delta$ one has
$$
  \left| \frac{f(x + \alpha v') - f(x)}{\alpha} - f'(x, v) \right| < \varepsilon.
$$
As is easily seen, if $f$ is d.d. at $x$ and Lipschitz continuous near this point, then $f$ is d.d. at this point
uniformly along finite dimensional spaces. Furthermore, note that in the finite dimensional case $f$ is d.d. at $x$
uniformly along finite dimensional spaces iff 
$$
  f'(x, v) = \lim_{[\alpha, v'] \to [+0, v]} \frac{f(x + \alpha v') - f(x)}{\alpha}
  \quad \forall v \in X,
$$
i.e. iff $f$ is Hadamard d.d. at $x$ \cite{DemRub_book}. 

\begin{definition}
A function $f \colon U \to \mathbb{R}$ is called \textit{quasidifferentiable} at a point $x \in U$ iff $f$ is
d.d. at $x$ and there exists a pair $\mathscr{D} f(x) = [\underline{\partial} f(x), \overline{\partial} f(x)]$ of
convex weak$^*$ compact sets $\underline{\partial} f(x), \overline{\partial} f(x) \subset X^*$ such that
\begin{equation} \label{eq:QuasidiffDef}
  f'(x, v) = \max_{x^* \in \underline{\partial} f(x)} \langle x^*, v \rangle +
  \min_{y^* \in \overline{\partial} f(x)} \langle y^*, v \rangle \quad \forall v \in X
\end{equation}
(i.e. $f'(x, \cdot)$ can be represented as the difference of two continuous sublinear functions). The pair 
$\mathscr{D} f(x)$ is called a \textit{quasidifferential} of $f$ at $x$, while the sets $\underline{\partial} f(x)$ and
$\overline{\partial} f(x)$ are called \textit{subdifferential} and \textit{superdifferential} of $f$ at $x$
respectively. Finally, $f$ is called quasidifferentiable at $x$ uniformly along finite
dimensional spaces, if $f$ is quasidifferentiable and d.d. uniformly along finite dimensional spaces at this point. 
\end{definition}

The calculus of quasidifferentiable functions can be found in
\cite{DemyanovDixon,DemRub_book,Quasidifferentiability_book}. Here we only mention that the set of all functions that
are quasidifferentiable at a given point uniformly along finite dimensional spaces is closed under addition,
multiplication, pointwise maximum/minimum of finite families of functions, and composition with continuously
differentiable functions. Furthermore, any finite DC (difference-of-convex) function is quasidifferentiable uniformly
along finite dimensional spaces. 

Let us also note that if $f$ is a convex function and $\partial f$ is its subdifferential in the sense of convex
analysis, then the pair $[\partial f(x), 0]$ is a quasidifferential of $f$ at $x$. If $f$ is a DC function, 
i.e. $f = g - h$ for some convex functions $g$ and $h$, then the pair $[\partial g(x), - \partial h(x)]$ is a
quasidifferential of $f$ at $x$. Finally, if $f$ is locally Lipschitz continuous and regular in the sense of Clarke
(see~\cite[Definition~2.3.4]{Clarke}) at a point $x$, and $\partial_{Cl} f(x)$ is the Clarke subdifferential of $f$ at
$x$, then the pair $[\partial_{Cl} f(x), \{ 0 \}]$ is a quasidifferential of $f$ at $x$ (see
\cite{DemyanovDixon,DemRub_book,Quasidifferentiability_book} for more details).

Observe that a quasidifferential of a function $f$ is not unique. In particular, for any quasidifferential 
$\mathscr{D} f(x)$ of $f$ at $x$ and any weak$^*$ compact convex set $C \subset X^*$ the pair 
$[ \underline{\partial} f(x) + C, \overline{\partial} f(x) - C ]$ is a quasidifferential of $f$ at $x$ as well.
Therefore, there is an interesting problem to find a minimal, in some sense, quasidifferential of a given function. Some
results on this subject can be found in
\cite{Handschug,Scholtes92,PallaschkeUrbanski93,Gao98,GrzhybowskiUrbanski2006,Grzybowski2008,Grzhybowski2010,Pallaschke}
.

\begin{remark}
Throughout the article, when we say that a function $f$ is quasidifferentiable at a point $x$, we suppose that some
quasidifferential $\mathscr{D} f(x)$ of $f$ at $x$ is given and formulate all assumptions with respect to the given
quasidifferential $\mathscr{D} f(x)$. Alternatively, one can define a quasidifferential as an equivalence class, i.e.
as an infinite collection of all those pairs $[\underline{\partial} f(x), \overline{\partial} f(x)]$ for which
\eqref{eq:QuasidiffDef} holds true, and use equivalence classes (cf.~\cite{Uderzo2,Dolgopolik_AbstrConvApprox}). In the
author's opinion, this approach is rather cumbersome and we do not adopt it in this paper.
\end{remark}

\section{The Contingent Cone to a Set Defined by Quasidifferentiable Constraints}
\label{Section_ContingentCones}

In this section, we study the contingent cone to a set defined by quasidifferentiable equality and inequality
constraints and describe convex subcones of this cone in terms quasidifferentials of the constraints. The main results
of this section were largely inspired by the papers of Di et al. \cite{Di1,Di2}.

For any set $C \subset X$ and $x \in X$ denote $d(x, C) = \inf_{y \in C} \| x - y \|$. Recall that 
\textit{the contingent cone} $T_M(x)$ to a set $M \subset X$ at a point $x \in M$ consists of all those 
$v \in X$ for which $\liminf_{\alpha \to +0} d(x + \alpha v, M) / \alpha = 0$. Equivalently, $v \in T_M(x)$ iff there
exist a sequence $\{ \alpha_n \} \subset (0, + \infty)$ and a sequence $\{ v_n \} \subset X$ such that 
$\alpha_n \to +0$ and $v_n \to v$ as $n \to \infty$, and $x + \alpha_n v_n \in M$ for all $n \in \mathbb{N}$.
Note that the contingent cone need not be convex.

Our aim is to describe the cone $T_M(x)$ and/or its convex subcones in the case when
\begin{equation} \label{QuasidiffSet}
  M = \Big\{ x \in X \Bigm| f_i(x) = 0, \quad i \in I, \quad g_j(x) \le 0, \quad j \in J \Big\}
\end{equation}
in terms of quasidifferentials of the functions $f_i \colon X \to \mathbb{R}$ and $g_j \colon X \to \mathbb{R}$ 
(here $I = \{ 1, \ldots, m \}$ and $J = \{ 1, \ldots, l \}$). To this end, we utilize the following auxiliary result,
which is a simple corollary to the Borsuk-Krasnoselskii antipodal theorem (see, e.g. \cite[Corollary~16.7]{Zeidler}).

\begin{lemma}[generalized intermediate value theorem]
Let $r^i \colon [-1, 1]^m \to \mathbb{R}$, $i \in I = \{ 1, \ldots, m \}$, be continuous functions such that for any 
$i \in I$ and for all $\tau^j \in [-1, 1]$, $j \ne i$ one has
\begin{equation} \label{eq:AntipodalConditions}
\begin{split}
  r^i(\tau^1, \ldots, \tau^{i - 1}, -1, \tau^{i + 1}, \ldots, \tau^m) &< 0, \\
  r^i(\tau^1, \ldots, \tau^{i - 1}, 1, \tau^{i + 1}, \ldots, \tau^m) &> 0.
\end{split}
\end{equation}
Then there exists $\widehat{\tau} \in (-1, 1)^m$ such that $r^i(\widehat{\tau}) = 0$ for all $i \in I$.
\end{lemma}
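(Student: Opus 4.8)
The plan is to recognize the statement as the Poincar\'e--Miranda theorem and to prove it via Brouwer degree theory, using the Borsuk--Krasnoselskii antipodal theorem as the excerpt suggests. Write $Q = (-1, 1)^m$ and $\overline{Q} = [-1, 1]^m$, and assemble the given functions into a single continuous map $F = (r^1, \ldots, r^m) \colon \overline{Q} \to \mathbb{R}^m$. The claim then becomes the assertion that $F$ has a zero in the \emph{open} cube $Q$.

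First I would check that the Brouwer degree $\deg(F, Q, 0)$ is well defined, that is, that $F$ does not vanish on $\partial Q$. Any boundary point $\tau \in \partial Q$ satisfies $\tau^i = \pm 1$ for at least one index $i \in I$; by \eqref{eq:AntipodalConditions} the corresponding component $r^i(\tau)$ is then strictly negative (if $\tau^i = -1$) or strictly positive (if $\tau^i = 1$), so $F(\tau) \neq 0$.

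The central device is the homotopy $H(\tau, t) = F(\tau) - t F(-\tau)$, $t \in [0, 1]$, which deforms $F$ (at $t = 0$) into the map $\tau \mapsto F(\tau) - F(-\tau)$ (at $t = 1$). I would verify that $H(\cdot, t)$ never vanishes on $\partial Q$ for any $t \in [0, 1]$: fixing $\tau \in \partial Q$ and an index $i$ with $\tau^i = 1$, the conditions \eqref{eq:AntipodalConditions} give $r^i(\tau) > 0$, while $r^i(-\tau) < 0$ because $-\tau$ has $i$-th coordinate $-1$; hence the $i$-th component $H^i(\tau, t) = r^i(\tau) - t\, r^i(-\tau) \ge r^i(\tau) > 0$. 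The case $\tau^i = -1$ is symmetric and yields $H^i(\tau, t) < 0$. This verification --- exploiting the sign hypotheses \eqref{eq:AntipodalConditions} to keep the homotopy admissible along the whole boundary --- is the only nontrivial step; everything else is bookkeeping.

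Finally I would invoke homotopy invariance of the degree to obtain $\deg(F, Q, 0) = \deg(F(\cdot) - F(-\cdot), Q, 0)$, observe that the map $\tau \mapsto F(\tau) - F(-\tau)$ is odd on the symmetric domain $\overline{Q}$, and apply the Borsuk--Krasnoselskii antipodal theorem to conclude that this degree is odd, in particular nonzero. Since $\deg(F, Q, 0) \neq 0$ and $F$ does not vanish on $\partial Q$, the solution property of the degree furnishes a point $\widehat{\tau} \in Q = (-1, 1)^m$ with $F(\widehat{\tau}) = 0$, i.e. $r^i(\widehat{\tau}) = 0$ for all $i \in I$, which is exactly the desired conclusion.
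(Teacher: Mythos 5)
Your proof is correct and follows exactly the route the paper indicates: the paper gives no written proof of this lemma, stating only that it is a simple corollary of the Borsuk--Krasnoselskii antipodal theorem, and your degree-theoretic argument (admissibility of $F$ on $\partial Q$, the homotopy $H(\tau,t)=F(\tau)-tF(-\tau)$ kept nonzero on the boundary by the sign conditions, and Borsuk's theorem applied to the odd map $F(\cdot)-F(-\cdot)$) is the standard and complete realization of that derivation. No gaps.
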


For any $C \subset X^*$ and $v \in X$ denote by $s(C, v) = \sup_{x^* \in C} \langle x^*, v \rangle$ \textit{the support
function} of the set $C$. Define also $J(x) = \{ j \in J \mid g_j(x) = 0 \}$ for any $x \in X$. The following theorem
describes how one can compute a convex subcone of $T_M(x)$, if a certain constraint qualification is satisfied for
\textit{some} elements of quasidifferentials of the functions $f_i$ and $g_j$.

\begin{theorem} \label{thrm:ContingConeToQuasidiffSet}
Let the functions $f_i$, $i \in I$, be continuous in a neighbourhood of a point $\overline{x} \in M$, the functions
$g_j$, $j \notin J(\overline{x})$, be upper semicontinuous (u.s.c.) at this point, and let $f_i$, $i \in I$, and $g_j$, 
$j \in J(\overline{x})$, be quasidifferentiable at $\overline{x}$ uniformly along finite dimensional spaces. Let also 
$x_i^* \in \underline{\partial} f_i(\overline{x})$, $y_i^* \in \overline{\partial} f_i(\overline{x})$, $i \in I$,
and $z_j^* \in \overline{\partial} g_j(\overline{x})$, $j \in J(\overline{x})$, be given. Suppose finally that the
following constraint qualification holds true:
\begin{enumerate}
\item{for any $i \in I$ there exists $v_i \in X$ such that 
$s( \underline{\partial} f_i(\overline{x}) + y_i^*, v_i) < 0$ and for any $k \ne i$ one has
$s( \underline{\partial} f_k(\overline{x}) + y_k^*, v_i) \le 0$ and
$s( - x_k^* - \overline{\partial} f_k(\overline{x}), v_i) \le 0$;
\label{Assumpt_ImplicitMFCQ1}}

\item{for any $i \in I$ there exists $w_i \in X$ such that 
$s(- x_i^* - \overline{\partial} f_i(\overline{x}), w_i) < 0$ and for any $k \ne i$ one has
$s(- x_k^* - \overline{\partial} f_k(\overline{x}), w_i) \le 0$ and
$s(\underline{\partial} f_k(\overline{x}) + y_k^*, w_i) \le 0$;
\label{Assumpt_ImplicitMFCQ2}}

\item{there exists $v_0 \in X$ such that $s( \underline{\partial} g_j(\overline{x}) + z_j^*, v_0) < 0$ for any 
$j \in J(\overline{x})$, while for any $i \in I$ one has 
$s( \underline{\partial} f_i(\overline{x}) + y_i^*, v_0) \le 0$ and 
$s( - x_i^* - \overline{\partial} f_i(\overline{x}), v_0) \le 0$.
\label{Assumpt_ImplicitMFCQ3}}
\end{enumerate}
Then
\begin{multline} \label{ConvexSubcone}
  \Big\{ v \in X \Bigm| s\big( \underline{\partial} f_i(\overline{x}) + y_i^*, v \big) \le 0, \quad
  s\big( - x_i^* - \overline{\partial} f_i(\overline{x}), v \big) \le 0 \quad \forall i \in I, \\
  s\big( \underline{\partial} g_j(\overline{x}) + z_j^*, v \big) \le 0 \quad \forall j \in J(\overline{x}) \Big\}
  \subseteq T_M(\overline{x}).
\end{multline}
\end{theorem}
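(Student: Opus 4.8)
The plan is to prove $v \in T_M(\overline{x})$ for each $v$ in the set $S$ on the left-hand side of \eqref{ConvexSubcone} by explicitly constructing sequences $\alpha_n \to +0$ and $v_n \to v$ with $\overline{x} + \alpha_n v_n \in M$, the required near-feasible points being produced by the generalized intermediate value theorem. First I would record the basic linear estimates. Writing $A_i(v) = s(\underline{\partial} f_i(\overline{x}) + y_i^*, v)$, $B_i(v) = s(- x_i^* - \overline{\partial} f_i(\overline{x}), v)$ and $G_j(v) = s(\underline{\partial} g_j(\overline{x}) + z_j^*, v)$, each of these is a continuous sublinear function, and from \eqref{eq:QuasidiffDef} together with $y_i^* \in \overline{\partial} f_i(\overline{x})$ and $x_i^* \in \underline{\partial} f_i(\overline{x})$ one obtains
$$
  - B_i(v) \le f_i'(\overline{x}, v) \le A_i(v), \qquad g_j'(\overline{x}, v) \le G_j(v).
$$
In particular, for $v \in S$ one has $A_i(v) \le 0$ and $B_i(v) \le 0$, so that $f_i'(\overline{x}, v) = 0$ for all $i \in I$, while $g_j'(\overline{x}, v) \le 0$ for all $j \in J(\overline{x})$.

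Next I would fix $v \in S$, put $X_0 = \linhull\{ v_0, v_i, w_i : i \in I \}$, and build a perturbation of $v$ inside $v + X_0$ parametrized by the cube $[-1, 1]^m$. With $(\tau^i)_+ = \max\{ \tau^i, 0 \}$, $(\tau^i)_- = \max\{ -\tau^i, 0 \}$, a fixed constant $\beta > 0$, and a small scale $\varepsilon > 0$, I set
$$
  v_\varepsilon(\tau) = v + \varepsilon \Big( \sum_{i \in I} \big[ (\tau^i)_+ w_i + (\tau^i)_- v_i \big] + \beta v_0 \Big).
$$
Using positive homogeneity and subadditivity of $A_i, B_i, G_j$ together with the sign information in the constraint qualification---namely $A_i(v_i) < 0$, $B_i(w_i) < 0$, the nonpositivity of all cross terms $A_i(v_k), A_i(w_k), B_i(v_k), B_i(w_k)$ for $k \ne i$, and $A_i(v_0) \le 0$, $B_i(v_0) \le 0$---I would derive the uniform estimates: on the face $\{ \tau^i = -1 \}$ one has $A_i(v_\varepsilon(\tau)) \le \varepsilon A_i(v_i) < 0$, and on the face $\{ \tau^i = 1 \}$ one has $B_i(v_\varepsilon(\tau)) \le \varepsilon B_i(w_i) < 0$. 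Crucially, the two quantities $A_i(w_i)$ and $B_i(v_i)$ not controlled by the constraint qualification never appear, because the corresponding coefficient vanishes on the relevant face. Finally, choosing $\beta$ large enough to dominate the (bounded) contributions of the cross terms $G_j(v_i), G_j(w_i)$, I would arrange $G_j(v_\varepsilon(\tau)) < 0$ for every $j \in J(\overline{x})$ and every $\tau \in [-1, 1]^m$, using $G_j(v) \le 0$ and $G_j(v_0) < 0$.

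Then I would pass from these estimates on the sublinear majorants to the signs of the actual constraint values. For fixed $\varepsilon$, the set $\{ v_\varepsilon(\tau) : \tau \in [-1, 1]^m \}$ is a compact subset of $v + X_0$, so a covering argument based on quasidifferentiability uniformly along finite dimensional spaces, together with the continuity of $w \mapsto f_i'(\overline{x}, w)$, yields $\sup_\tau | f_i(\overline{x} + \alpha v_\varepsilon(\tau)) / \alpha - f_i'(\overline{x}, v_\varepsilon(\tau)) | \to 0$ as $\alpha \to +0$, and likewise for $g_j$, $j \in J(\overline{x})$. Hence for all sufficiently small $\alpha$ (depending on $\varepsilon$) the continuous functions $r^i_\alpha(\tau) = f_i(\overline{x} + \alpha v_\varepsilon(\tau))$ satisfy $r^i_\alpha < 0$ on $\{ \tau^i = -1 \}$ and $r^i_\alpha > 0$ on $\{ \tau^i = 1 \}$, while $g_j(\overline{x} + \alpha v_\varepsilon(\tau)) < 0$ for $j \in J(\overline{x})$; for $j \notin J(\overline{x})$, upper semicontinuity and $g_j(\overline{x}) < 0$ give $g_j(\overline{x} + \alpha v_\varepsilon(\tau)) \le 0$ once $\alpha$ is small. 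The Lemma applied to $r^1_\alpha, \ldots, r^m_\alpha$ then produces $\widehat{\tau}(\alpha) \in (-1, 1)^m$ with $f_i(\overline{x} + \alpha v_\varepsilon(\widehat{\tau}(\alpha))) = 0$ for all $i \in I$, so that $\overline{x} + \alpha v_\varepsilon(\widehat{\tau}(\alpha)) \in M$.

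To finish, I would run a diagonal argument: choose $\varepsilon_n \to +0$, for each $n$ pick $\alpha_n < 1/n$ small enough for the previous step to apply with $\varepsilon = \varepsilon_n$, and set $v_n = v_{\varepsilon_n}(\widehat{\tau}(\alpha_n))$. Since $\| v_n - v \| \le \varepsilon_n ( \sum_i (\| v_i \| + \| w_i \|) + \beta \| v_0 \| ) \to 0$ and $\overline{x} + \alpha_n v_n \in M$, the definition of the contingent cone gives $v \in T_M(\overline{x})$. The \emph{main obstacle} is precisely this balancing act: the perturbation must be large enough, relative to the nonlinear remainder in the first-order expansion, for the antipodal sign conditions to hold and the intermediate value theorem to apply, yet small enough that $v_n \to v$; decoupling these two requirements through the parameters $\varepsilon$ and $\alpha$, and securing uniform-in-$\tau$ control of the remainder via uniform directional differentiability, is the heart of the argument. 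A secondary delicate point is that the equality-constraint perturbations $v_i, w_i$ carry no a priori sign information against the inequality constraints, which is exactly why the direction $v_0$ from the third part of the constraint qualification must be blended in with a sufficiently large fixed weight $\beta$.
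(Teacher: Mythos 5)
Your proof is correct and follows essentially the same route as the paper's: the same cube-parametrized perturbation built from the $v_i$, $w_i$ and pushed into the feasible side by $v_0$, the same passage from support-function estimates to signs of the actual constraint values via uniform directional differentiability over a compact subset of a finite-dimensional affine subspace, and the same application of the generalized intermediate value theorem to produce feasible points. The only (harmless) difference is that you scale the $v_0$-term by $\varepsilon\beta$ with $\beta$ fixed large, so your constructed directions converge to $v$ directly, whereas the paper keeps a fixed coefficient $\gamma$ on $v_0$, first obtains $v + \gamma v_0 \in T_M(\overline{x})$, and then lets $\gamma \to 0$ using the closedness of the contingent cone.
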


\begin{proof}
For all $\tau = (\tau^1, \ldots, \tau^m) \in [-1, 1]^m$ define
$$
  \eta(\tau) = \sum_{i = 1}^m ( \max\{ -\tau^i, 0 \} v_i + \max\{ \tau^i, 0 \} w_i ).
$$
For any $i \in I$ denote $p_i(\cdot) = s( \underline{\partial} f_i(\overline{x}) + y_i^*, \cdot )$ and
$q_k(\cdot) = s( - x_k^* - \overline{\partial} f_k(\overline{x}), \cdot)$. Observe that from the definition of
quasidifferential it follows that for all $v \in X$ one has $- q_i(v) \le f_i'(\overline{x}, v) \le p_i(v)$
(see~\eqref{eq:QuasidiffDef}).

Let $v \in X$ belong to the set on the left-hand side of \eqref{ConvexSubcone}. Taking into account
assumptions~\ref{Assumpt_ImplicitMFCQ1}--\ref{Assumpt_ImplicitMFCQ3} and the fact that the functions $p_i$ are sublinear
one obtains that for any $i \in I$, $n \in \mathbb{N}$, $\gamma > 0$, and $\tau \in [-1, 1]^m$ the following
inequalities hold true:
\begin{multline} \label{DD_UpperEstim}
  f'_i\Big(\overline{x}, v + \gamma v_0 
  + \frac{1}{n} \eta(\tau^1, \ldots, \tau^{i-1}, -1, \tau^{i + 1}, \ldots, \tau^m) \Big) \\
  \le p_i\Big( v + \gamma v_0 + \frac{1}{n} \eta(\tau^1, \ldots, \tau^{i-1}, -1, \tau^{i + 1}, \ldots, \tau^m) \Big) 
  \le p_i(v) + \gamma p_i(v_0) \\
  + \frac{1}{n} p_i(v_i) + 
  \frac{1}{n} \sum_{j \ne i} \big( \max\{ -\tau^j, 0 \} p_i(v_j) + \max\{ \tau^j, 0 \} p_i(w_j) \big) \\
  \le \frac{1}{n} p_i(v_i) < 0.
\end{multline}
Similarly, for any $i \in I$, $n \in \mathbb{N}$, $\gamma > 0$, and $\tau \in [-1, 1]^m$ one has
\begin{multline} \label{DD_LowerEstim}
  f'_i\Big( \overline{x}, v + \gamma v_0 
  + \frac{1}{n} \eta(\tau^1, \ldots, \tau^{i-1}, 1, \tau^{i + 1}, \ldots, \tau^m) \Big) \\
  \ge - q_i\Big( \overline{x}, v + \gamma v_0 
  + \frac{1}{n} \eta(\tau^1, \ldots, \tau^{i-1}, 1, \tau^{i + 1}, \ldots, \tau^m) \Big) \ge - \frac{1}{n} q_i(w_i) > 0.
\end{multline}
Let us verify that from \eqref{DD_UpperEstim} and \eqref{DD_LowerEstim} it follows that for any $n \in \mathbb{N}$ and
$\gamma > 0$ there exists $\alpha_n(\gamma) > 0$ such that for all $0 < \alpha < \alpha_n(\gamma)$, $i \in I$, and
$\tau \in [-1, 1]^m$ the following inequalities hold true:
\begin{align} \label{AntipodalCond_Bottom}
  f_i\left( \overline{x} + \alpha \Big( v + \gamma v_0 
  + \frac{1}{n}\eta(\tau^1, \ldots, \tau^{i-1}, -1, \tau^{i + 1}, \ldots, \tau^m) \Big) \right) &< 0, \\
  f_i\left( \overline{x} + \alpha \Big( v + \gamma v_0 + 
  \frac{1}{n}\eta(\tau^1, \ldots, \tau^{i-1}, 1, \tau^{i + 1}, \ldots, \tau^m) \Big) \right) &> 0.
  \label{AntipodalCond_Top}
\end{align}
Indeed, fix any $i \in I$, $\gamma > 0$, and $n \in \mathbb{N}$. Arguing by reductio ad absurdum, suppose that for any
$\alpha_n(\gamma) > 0$ there exist $\alpha \in (0, \alpha_n(\gamma))$ and $\tau \in [-1, 1]^m$ such that, say,
\eqref{AntipodalCond_Bottom} is not valid. Then there exist a sequence $\{ \alpha_k \} \subset (0, + \infty)$
converging to zero and a sequence $\{ \tau_k \} \subset [-1, 1]^m$ such that
$$
  f_i\left( \overline{x} + \alpha_k \Big( v + \gamma v_0 + 
  \frac{1}{n}\eta(\tau^1_k, \ldots, \tau^{i-1}_k, -1, \tau^{i + 1}_k, \ldots, \tau^m_k) \Big) \right) \ge 0
$$
Without loss of generality one can suppose that $\{ \tau_k \}$ converges to some $\widehat{\tau} \in [-1, 1]^m$.
Therefore, utilizing the facts that $f_i$ is d.d. at $\overline{x}$ uniformly along finite dimensional spaces,
the function $\eta(\cdot)$ is continuous and takes values in the finite dimensional space 
$X_0 = \linhull\{ v_i, w_i \mid i \in I \}$, and $f_i(\overline{x}) = 0$ one obtains that
\begin{align*}
  &f_i'\left( \overline{x}, v + \gamma v_0 + \frac{1}{n}
  \eta(\widehat{\tau}^1, \ldots, \widehat{\tau}^{i-1}, -1, \widehat{\tau}^{i + 1}, \ldots, \widehat{\tau}^m) \right) 
  = \\
  &\lim_{k \to \infty} \frac{1}{\alpha_k}
   f_i\left( \overline{x} + \alpha_k \Big( v + \gamma v_0 + 
  \frac{1}{n}\eta(\tau^1_k, \ldots, \tau^{i-1}_k, -1, \tau^{i + 1}_k, \ldots, \tau^m_k) \Big) \right) \ge 0,
\end{align*}
which contradicts \eqref{DD_UpperEstim}.

For any $j \in J(\overline{x})$ denote $u_j(\cdot) = s(\underline{\partial} g_j(\overline{x}) + z_j^*, \cdot)$. Note
that $u_j$ are continuous sublinear functions (recall that $\underline{\partial} g_j(\overline{x})$ is a convex
weak${}^*$ compact set). Therefore, for any $j \in J(\overline{x})$, $\gamma > 0$, $n \in \mathbb{N}$, and 
$\tau \in [-1, 1]^m$ one has 
\begin{multline*}
  g_j'\Big( \overline{x}, v + \gamma v_0 + \frac{1}{n}\eta(\tau) \Big) 
  \le u_j\Big( v + \gamma v_0 + \frac{1}{n}\eta(\tau) \Big) \\
  \le u_j(v) + \gamma u_j(v_0) + \frac{1}{n} u_j(\eta(\tau))
  \le \gamma u_j(v_0) + \frac{1}{n} \max_{s \in [-1, 1]^m} u_j(\eta(s))
\end{multline*}
(here we used the fact that $u_j(v) \le 0$, since $v$ belongs to the set on the left-hand side of
\eqref{ConvexSubcone}). By assumption~\ref{Assumpt_ImplicitMFCQ3} one has $u_j(v_0) < 0$. Consequently, for any 
$\gamma > 0$ one can find $n_{\gamma} \in \mathbb{N}$ such that for all $j \in J(\overline{x})$ and $n \ge n_{\gamma}$
one has
\begin{equation} \label{IneqConstrDD_UpperEstim}
  g_j'\left( \overline{x}, v + \gamma v_0 + \frac{1}{n}\eta(\tau) \right) \le \frac{\gamma}{2} u_j(v_0) < 0
  \quad \forall \tau \in [-1, 1]^m.
\end{equation}
Let us check that this inequality implies that for any $\gamma > 0$ and $n \ge n_{\gamma}$ there exists
$\beta_n(\gamma) > 0$ such that for all $j \in J(\overline{x})$, $\tau \in [-1, 1]^m$, and 
$0 < \alpha < \beta_n(\gamma)$ one has
\begin{equation} \label{InteriorDirections}
  g_j\left( \overline{x} + \alpha \Big( v + \gamma v_0 + \frac{1}{n}\eta(\tau) \Big) \right) < 0.
\end{equation}
Indeed, fix any $j \in J(\overline{x})$, $\gamma > 0$, and $n \ge n_{\gamma}$. Arguing by reductio ad absurdum, suppose
that for any $\beta_n(\gamma) > 0$ there exist $\alpha \in (0, \beta_n(\gamma))$ and $\tau \in [-1, 1]^m$ such that 
\eqref{InteriorDirections} is not valid. Then there exist a sequence $\{ \alpha_k \} \subset (0, + \infty)$
converging to zero and a sequence $\{ \tau_k \} \subset [-1, 1]^m$ such that
$$
  g_j\left( \overline{x} + \alpha_k \Big( v + \gamma v_0 + \frac{1}{n}\eta(\tau_k) \Big) \right) \ge 0
  \quad \forall k \in \mathbb{N}.
$$
Without loss of generality one can suppose that $\{ \tau_k \}$ converges to some $\widehat{\tau} \in [-1, 1]^m$. Hence
with the use of the facts that $g_j$ is d.d. at $\overline{x}$ uniformly along finite dimensional spaces,
the function $\eta(\cdot)$ is continuous and takes values in the finite dimensional space 
$X_0 = \linhull\{ v_i, w_i \mid i \in I \}$, and $g_i(\overline{x}) = 0$, since $j \in J(\overline{x})$, one obtains
that
$$
  g_j'\left( \overline{x}, v + \gamma v_0 + \frac{1}{n}\eta(\widehat{\tau}) \right)
  = \lim_{k \to \infty} \frac{1}{\alpha_k}
  g_j\left( \overline{x} + \alpha_k \Big( v + \gamma v_0 + \frac{1}{n}\eta(\tau_k) \Big) \right) \ge 0,
$$
which contradicts \eqref{IneqConstrDD_UpperEstim}.

By our assumptions the functions $f_i$ are continuous in a neighbourhood $U$ of $\overline{x}$. By virtue of the fact
that the set $\{ \eta(\tau) \in X \mid \tau \in [-1, 1]^m \}$ is compact, for any $n \in \mathbb{N}$ and $\gamma > 0$
one can find $\delta_n(\gamma) > 0$ such that 
\begin{equation} \label{WithinContinuityRegion}
  \left\{ \overline{x} + \alpha \Big( v + \gamma v_0 + \frac{1}{n}\eta(\tau) \Big) \in X \Bigm| 
  \alpha \in [0, \delta_n(\gamma)], \: \tau \in [-1, 1]^m  \right\} \subset U.
\end{equation}
Furthermore, choosing $\delta_n(\gamma)$ small enough one can suppose that $g_j(x) < 0$ for any 
$j \notin J(\overline{x})$ and $x$ from the set on the left-hand side of \eqref{WithinContinuityRegion}, since 
$g_j(\overline{x}) < 0$ for any such $j$ and these functions are u.s.c. at $\overline{x}$.

Fix $\gamma > 0$, and for any $n \ge n_{\gamma}$ choose 
$0 < \alpha_n < \min\{ \alpha_n(\gamma), \beta_n(\gamma), \delta_n(\gamma) \}$ such that $\alpha_n \to 0$ as 
$n \to \infty$. For any $i \in I$ and $n \ge n_{\gamma}$ define
$$
  r^i_n(\tau) = f_i\left( \overline{x} + \alpha_n \Big( v + \gamma v_0 + \frac{1}{n}\eta(\tau) \Big) \right)
  \quad \forall \tau \in [-1, 1]^m.
$$
From \eqref{WithinContinuityRegion} and the definition of $U$ it follows that the functions $r^i_n(\cdot)$, 
$i \in I$, are continuous. Furthermore, inequalities \eqref{AntipodalCond_Bottom} and \eqref{AntipodalCond_Top} imply
that the functions $r^i_n(\cdot)$, $i \in I$, satisfy inequalities \eqref{eq:AntipodalConditions} from the generalized
intermediate value theorem. Therefore, by this theorem for any $n \ge n_{\gamma}$ there exists 
$\widehat{\tau}_n \in (-1, 1)^m$ such that $r^i_n(\widehat{\tau}_n) = 0$ for all $i \in I$, i.e. 
$f_i(\overline{x} + \alpha_n v_n) = 0$ for any $i \in I$, where 
$v_n = v + \gamma v_0 + \eta(\widehat{\tau}_n) / n$. Moreover, by \eqref{InteriorDirections} and the
choice of $\delta_n(\gamma)$ one has $g_j(\overline{x} + \alpha_n v_n) < 0$ for all $j \in J$. Thus, 
$\overline{x} + \alpha_n v_n \in M$ for any $n \ge n_{\gamma}$. Hence with the use of the fact
that $v_n \to v + \gamma v_0$ as $n \to \infty$ one obtains that $v + \gamma v_0 \in T_M(\overline{x})$ for any 
$\gamma > 0$, which implies that $v \in T_M(\overline{x})$, since the contingent cone is always closed. Thus, the proof
is complete.
\end{proof}

Observe that the set on the left-hand side of \eqref{ConvexSubcone} is a nontrivial closed convex cone ($v_0$ belongs to
this cone). Thus, the theorem above provides one with a way to compute convex subcones of the contingent cone
$T_M(\overline{x})$ with the use of those vectors from quasidifferentials of the functions $f_i$ and $g_j$ that satisfy
assumptions \ref{Assumpt_ImplicitMFCQ1}--\ref{Assumpt_ImplicitMFCQ3}. Let us give a simple geometric description of
these assumptions, which sheds some light on the way they are connected with well-known constraint qualifications.

\begin{remark}
It is worth noting that there is a connection between assumptions
\ref{Assumpt_ImplicitMFCQ1}--\ref{Assumpt_ImplicitMFCQ3} of Theorem~\ref{thrm:ContingConeToQuasidiffSet} and some
conditions on the directional derivatives of the functions $f_i$ and $g_j$. Indeed, from the definition of
quasidifferential \eqref{eq:QuasidiffDef} it follows that assumption~\ref{Assumpt_ImplicitMFCQ1} is satisfied
\textit{for
some} $x_i^* \in \underline{\partial} f_i(\overline{x})$ and $y_i^* \in \overline{\partial} f_i(\overline{x})$, 
$i \in I$, if and only if 
\begin{equation} \label{ImplicitMFCQviaDD1}
  f'_i(\overline{x}, v_i) < 0, \quad f'_k(\overline{x}, v_i) = 0 \quad \forall k \ne i.
\end{equation}
Similarly, assumption~\ref{Assumpt_ImplicitMFCQ2} is satisfied  \textit{for some} 
$x_i^* \in \underline{\partial} f_i(\overline{x})$ and $y_i^* \in \overline{\partial} f_i(\overline{x})$, $i \in I$
(which might differ from the ones for which assumption~\ref{Assumpt_ImplicitMFCQ1} is valid) if and only if 
\begin{equation} \label{ImplicitMFCQviaDD2}
  f'_i(\overline{x}, w_i) > 0, \quad f'_k(\overline{x}, w_i) = 0 \quad \forall k \ne i.
\end{equation}
Finally, assumption~\ref{Assumpt_ImplicitMFCQ3} holds true \textit{for some} $x_i^* \in \underline{\partial}
f_i(\overline{x})$, $y_i^* \in \overline{\partial} f_i(\overline{x})$, $i \in I$, and 
$z_j^* \in \overline{\partial} g_j(\overline{x})$, $j \in J(\overline{x})$, if and only if
\begin{equation} \label{ImplicitMFCQviaDD3}
  g'_j(\overline{x}, v_0) < 0 \quad \forall j \in J(\overline{x}), \quad
  f'_i(\overline{x}, v_0) = 0 \quad \forall i \in I.
\end{equation}
Note, howerever, that the validity of \eqref{ImplicitMFCQviaDD1}--\eqref{ImplicitMFCQviaDD3} does not imply that 
\ref{Assumpt_ImplicitMFCQ1}--\ref{Assumpt_ImplicitMFCQ3} holds true, since
\eqref{ImplicitMFCQviaDD1}--\eqref{ImplicitMFCQviaDD3} only imply that each of assumptions
\ref{Assumpt_ImplicitMFCQ1}--\ref{Assumpt_ImplicitMFCQ3} is valid for some $x_i^*$, $y_i^*$, and $z_j^*$, while in
Theorem~\ref{thrm:ContingConeToQuasidiffSet} we must suppose that they are valid for the same $x_i^*$, $y_i^*$, and
$z_j^*$.
\end{remark}

For any subset $A$ of a real vector space $E$ denote by
$$
  \cone A = \left\{ \sum_{i = 1}^n \lambda_i x_i \Biggm| 
  x_i \in A, \enspace \lambda_i \ge 0, \enspace i \in \{ 1, \ldots, n \}, \enspace n \in \mathbb{N} \right\}
$$
the smallest convex cone containing $A$ and by $\linhull(A)$ be the linear span of $A$.

\begin{proposition} \label{prp:GeometricRegConditions}
Let the functions $f_i$, $i \in I$, and $g_j$, $j \in J(\overline{x})$, be quasidifferentiable at a point 
$\overline{x} \in M$. Let also $x_i^* \in \underline{\partial} f_i(\overline{x})$, 
$y_i^* \in \overline{\partial} f_i(\overline{x})$, $i \in I$, and $z_j^* \in \overline{\partial} g_j(\overline{x})$, 
$j \in J(\overline{x})$, be given. Then assumptions~\ref{Assumpt_ImplicitMFCQ1}--\ref{Assumpt_ImplicitMFCQ3} of
Theorem~\ref{thrm:ContingConeToQuasidiffSet} are satisfied if and only if
\begin{gather}	\label{AlmostLI_SubQuasidiff}
  C_i \cap \cl^*\cone \big\{ - C_k \bigm| k \ne i \big\} = \emptyset \quad \forall i \in I, \\
  \co\big\{ \underline{\partial} g_j(\overline{x}) + z_j^* \bigm| j \in J(\overline{x}) \big\} \cap
  \cl^* \cone \big\{ - C_i \bigm| i \in I \big\} = \emptyset,	\label{NonIntersect_SubQuasidiff}
\end{gather}
where 
$C_i = ( \underline{\partial} f_i(\overline{x}) + y_i^*) \cup (- x_i^* - \overline{\partial} f_i(\overline{x}) )$, 
$i \in I$.
\end{proposition}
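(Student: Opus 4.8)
The plan is to reduce each of the three assumptions to a strict separation statement in the locally convex space $(X^*, w^*)$, whose space of weak$^*$ continuous linear functionals is canonically $X$. The basic tool is the standard fact that a weak$^*$ compact convex set and a disjoint weak$^*$ closed convex set can be strictly separated by some $v \in X$. Two elementary observations turn such separations into the support-function inequalities of assumptions~\ref{Assumpt_ImplicitMFCQ1}--\ref{Assumpt_ImplicitMFCQ3}. First, writing $A_i = \underline{\partial} f_i(\overline{x}) + y_i^*$ and $B_i = - x_i^* - \overline{\partial} f_i(\overline{x})$ for the two convex weak$^*$ compact pieces of $C_i = A_i \cup B_i$, one has $s(C_k, v) = \max\{ s(A_k, v), s(B_k, v) \}$, so $s(C_k, v) \le 0$ is exactly the pair of inequalities on $A_k$ and $B_k$ appearing in the assumptions. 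Second, for any $S \subset X^*$ and $v \in X$ the functional $\langle \cdot, v \rangle$ is nonnegative on the weak$^*$ closed convex cone $\cl^* \cone\{ -S \}$ if and only if $s(S, v) \le 0$, and in that case its infimum over the cone equals $0$ (attained at the origin), while otherwise it is $-\infty$; the weak$^*$ closure is harmless because $\langle \cdot, v \rangle$ is weak$^*$ continuous, so the sign condition on the generators passes to the closed cone.

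With these observations in hand, I would prove \eqref{AlmostLI_SubQuasidiff} as follows. Fix $i \in I$ and set $D_i = \cl^* \cone\{ -C_k \mid k \ne i \}$, a weak$^*$ closed convex cone. Since $C_i = A_i \cup B_i$, the disjointness $C_i \cap D_i = \emptyset$ is equivalent to the conjunction $A_i \cap D_i = \emptyset$ and $B_i \cap D_i = \emptyset$. Each of $A_i, B_i$ is weak$^*$ compact convex (a translate of a weak$^*$ compact convex set) and $D_i$ is weak$^*$ closed convex, so $A_i \cap D_i = \emptyset$ holds if and only if there is $v_i \in X$ strictly separating them. Because $D_i$ is a cone, any such $v_i$ necessarily satisfies $\langle \cdot, v_i \rangle \ge 0$ on $D_i$ and $s(A_i, v_i) < 0$; by the second observation this is precisely $s( \underline{\partial} f_i(\overline{x}) + y_i^*, v_i) < 0$ together with $s(C_k, v_i) \le 0$ for all $k \ne i$, i.e. assumption~\ref{Assumpt_ImplicitMFCQ1} at index $i$. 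The same argument applied to $B_i$ produces a vector $w_i$ realising assumption~\ref{Assumpt_ImplicitMFCQ2}. Hence \eqref{AlmostLI_SubQuasidiff} for all $i$ is equivalent to assumptions~\ref{Assumpt_ImplicitMFCQ1} and~\ref{Assumpt_ImplicitMFCQ2} together.

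For \eqref{NonIntersect_SubQuasidiff} I would set $G = \co\{ \underline{\partial} g_j(\overline{x}) + z_j^* \mid j \in J(\overline{x}) \}$ and $D_0 = \cl^* \cone\{ -C_i \mid i \in I \}$ and argue exactly as above, the only new point being the compactness of $G$. Here $G$ is the convex hull of a finite union of weak$^*$ compact convex sets, hence itself weak$^*$ compact: it is the image of the compact set $\Delta \times \prod_{j \in J(\overline{x})} (\underline{\partial} g_j(\overline{x}) + z_j^*)$, with $\Delta$ the unit simplex, under the weak$^*$ continuous map $(\lambda, (u_j)) \mapsto \sum_j \lambda_j u_j$. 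Consequently $G \cap D_0 = \emptyset$ is equivalent to the existence of $v_0 \in X$ with $s(G, v_0) < 0$ and $\langle \cdot, v_0 \rangle \ge 0$ on $D_0$; since $s(G, v_0) = \max_{j \in J(\overline{x})} s( \underline{\partial} g_j(\overline{x}) + z_j^*, v_0)$ and $\langle \cdot, v_0 \rangle \ge 0$ on $D_0$ means $s(C_i, v_0) \le 0$ for all $i$, this is exactly assumption~\ref{Assumpt_ImplicitMFCQ3}.

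The main obstacle, and the only place where more than a routine separation argument enters, is verifying that the separated objects have the right topological properties: the pieces $A_i, B_i$ and the convex hull $G$ must be weak$^*$ compact, and the cones $D_i, D_0$ must be weak$^*$ closed, which is precisely why the closure $\cl^*$ is built into the statement. Once these are in place, the equivalence is a threefold application of the strict separation theorem, with the cone structure automatically converting each separating functional into the normalised support-function inequalities of the assumptions.
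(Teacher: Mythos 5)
Your proof is correct and follows essentially the same route as the paper's: decompose $C_i$ into its two weak$^*$ compact convex pieces, observe that disjointness from the closed conic hull is equivalent to disjointness of each piece, and convert each disjointness into the corresponding support-function inequalities via strict separation in $(X^*, w^*)$, using the cone structure to normalise the separating hyperplane. The only difference is that you spell out the topological prerequisites (weak$^*$ compactness of the translates and of the convex hull $G$) that the paper leaves implicit.
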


\begin{proof}
Let assumption~\ref{Assumpt_ImplicitMFCQ3} from Theorem~\ref{thrm:ContingConeToQuasidiffSet} be valid. Then, as is easy
to see, $\langle x^*, v_0 \rangle < 0$ for any 
$x^* \in \co\{ \underline{\partial} g_j(\overline{x}) + z_j^* \mid j \in J(\overline{x}) \}$, while
$\langle x^*, v_0 \rangle \ge 0$ for any $x^* \in \cl^* \cone\{ - C_i \mid i \in I \}$. Hence
\eqref{NonIntersect_SubQuasidiff} holds true. Conversely, if \eqref{NonIntersect_SubQuasidiff} holds true, then
applying the separation theorem in the space $X^*$ endowed with weak${}^*$ topology one can find $v_0$ satisfying 
assumption~\ref{Assumpt_ImplicitMFCQ3}. Thus, this assumption is equivalent to \eqref{NonIntersect_SubQuasidiff}.
 
Let now assumption~\ref{Assumpt_ImplicitMFCQ1} of Theorem~\ref{thrm:ContingConeToQuasidiffSet} be satisfied. Then
$\langle x^*, v_i \rangle < 0$ for any $x_* \in \underline{\partial} f_i(\overline{x}) + y_i^*$, while
$\langle x^*, v_i \rangle \ge 0$ for any $x^* \in \cl^*\cone\{ - C_k \mid k \ne i \}$, which implies that the sets
$\underline{\partial} f_i(\overline{x}) + y_i^*$ and $\cl^*\cone\{ - C_k \mid k \ne i \}$ do not intersect.
Conversely, if these sets do not intersect, then applying the separation theorem in the space $X^*$ endowed with
weak${}^*$ topology one can find $v_i$ satisfying assumption~\ref{Assumpt_ImplicitMFCQ1}.

Arguing in the same way one can check that assumption~\ref{Assumpt_ImplicitMFCQ2} of
Theorem~\ref{thrm:ContingConeToQuasidiffSet} is satisfied iff
the sets $- x_i^* - \overline{\partial} f_i(\overline{x})$ and $\cl^*\cone\{ - C_k \mid k \ne i \}$ do not
have common points. Thus, assumptions~\ref{Assumpt_ImplicitMFCQ1} and \ref{Assumpt_ImplicitMFCQ2} are equivalent to
\eqref{AlmostLI_SubQuasidiff}.
\end{proof}

\begin{remark}
One can readily check that in the case $I = \{ 1 \}$ (i.e. when there is only one equality constraint) condition
\eqref{AlmostLI_SubQuasidiff} is reduced to the assumption that
$0 \notin C_1$, i.e. $0 \notin \underline{\partial} f_i(\overline{x}) + y_i^*$ and
$0 \notin - x_i^* - \overline{\partial} f_i(\overline{x})$.
\end{remark}

Let a function $f \colon X \to \mathbb{R}$ be quasidifferentiable at a point $x \in X$. Denote by
$[\mathscr{D} f(x)]^+ = \underline{\partial} f(x) + \overline{\partial} f(x)$ \textit{a quasidifferential sum} of $f$
at $x$. Quasidifferential sum is a weak${}^*$ compact convex set, which, as is easy to see, is not invariant under the
choice of quasidifferential. See \cite{Uderzo2,Dolgopolik_MetricReg} for applications of quasidifferential sum to
nonsmooth optimization and related problems.

Recall that subsets $A_1, \ldots, A_m$ of a real vector space $E$ are said to be \textit{linearly independent}, if the
inclusion $0 \in \lambda_1 A_1 + \ldots \lambda_m A_m$ is valid iff $\lambda_1 = \ldots = \lambda_m = 0$. We say that
these sets are \textit{strongly linearly independent}, if $A_i \cap \linhull\{ A_k \mid k \ne i \} = \emptyset$ for all
$i \in \{ 1, \ldots, m \}$. Clearly, if the sets $A_i$ are strongly linearly independent, they are linearly independent;
however the converse implication does not hold true in the general case (take $E = \mathbb{R}^2$, 
$A_1 = \co\{ (\pm 1, 1)^T \}$, and $A_2 = \{ (1, 0)^T \}$). In the case $m = 1$, (strong) linear independence is reduced
to the assumption that $0 \notin A_1$. 

\begin{proposition} \label{prp:Equiv_qdMFCQ}
Let $f_i$ and $g_j$ be as in Proposition~\ref{prp:GeometricRegConditions}. Then for assumptions
\ref{Assumpt_ImplicitMFCQ1}--\ref{Assumpt_ImplicitMFCQ3} of Theorem~\ref{thrm:ContingConeToQuasidiffSet} to be satisfied
for all $x_i^* \in \underline{\partial} f_i(\overline{x})$, $y_i^* \in \overline{\partial} f_i(\overline{x})$, 
$i \in I$, and $z_j^* \in \overline{\partial} g_j(\overline{x})$, $j \in J(\overline{x})$ it is sufficient that
\begin{gather}	\label{AlmostLI_qdMFCQ}
  [\mathscr{D} f_i(\overline{x})]^+ \cap 
  \cl^* \linhull \big\{ [\mathscr{D} f_k(\overline{x})]^+ \bigm| k \ne i \big\} = \emptyset
  \quad \forall i \in I, \\
  \co\big\{ [\mathscr{D} g_j(\overline{x})]^+ \bigm| j \in J(\overline{x}) \big\} \cap
  \cl^*\linhull \big\{ [\mathscr{D} f_i(\overline{x})]^+ \bigm| i \in I \big\} = \emptyset.
  \label{NonIntersect_qdMFCQ}
\end{gather}
Furthermore, these conditions become necessary, if the spans in \eqref{NonIntersect_qdMFCQ} and \eqref{AlmostLI_qdMFCQ}
are weak$^*$ closed (in particular, if $X$ is finite dimensional). In addition, if the span in \eqref{AlmostLI_qdMFCQ}
is
weak${}^*$ closed for any $i \in I$, then conditions \eqref{NonIntersect_qdMFCQ} and \eqref{AlmostLI_qdMFCQ} are
satisfied
if and only if the Mangasarian-Fromovitz constraint qualification in terms of quasidifferentials (q.d.-MFCQ) holds true
at $\overline{x}$, i.e. the sets $[\mathscr{D} f_i(\overline{x})]^+$, $i \in I$, are strongly linearly independent and
there exists $v_0 \in X$ such that $\langle x^*, v_0 \rangle = 0$ for any $x^* \in [\mathscr{D} f_i(\overline{x})]^+$, 
$i \in I$, and $\langle x^*, v_0 \rangle < 0$ for any $x^* \in [\mathscr{D} g_j(\overline{x})]^+$, 
$j \in J(\overline{x})$.
\end{proposition}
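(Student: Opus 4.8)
The plan is to reduce the whole statement, for any \emph{fixed} admissible elements $x_i^*$, $y_i^*$, $z_j^*$, to the geometric non-intersection conditions \eqref{AlmostLI_SubQuasidiff} and \eqref{NonIntersect_SubQuasidiff}, which by Proposition~\ref{prp:GeometricRegConditions} are equivalent to assumptions \ref{Assumpt_ImplicitMFCQ1}--\ref{Assumpt_ImplicitMFCQ3} for those elements. The elementary observations driving everything are the inclusions $\underline{\partial} f_i(\overline{x}) + y_i^* \subseteq [\mathscr{D} f_i(\overline{x})]^+$ and $- x_i^* - \overline{\partial} f_i(\overline{x}) \subseteq -[\mathscr{D} f_i(\overline{x})]^+$ (valid since $y_i^* \in \overline{\partial} f_i(\overline{x})$ and $x_i^* \in \underline{\partial} f_i(\overline{x})$), together with $\underline{\partial} g_j(\overline{x}) + z_j^* \subseteq [\mathscr{D} g_j(\overline{x})]^+$. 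Hence $C_i \subseteq [\mathscr{D} f_i(\overline{x})]^+ \cup (- [\mathscr{D} f_i(\overline{x})]^+)$, and, since $\cl^* \linhull\{ [\mathscr{D} f_k(\overline{x})]^+ \mid k \ne i \}$ is a linear subspace containing every $-C_k$, one has $\cl^* \cone\{ -C_k \mid k \ne i \} \subseteq \cl^* \linhull\{ [\mathscr{D} f_k(\overline{x})]^+ \mid k \ne i \}$.

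For sufficiency I would fix arbitrary admissible elements and check \eqref{AlmostLI_SubQuasidiff} and \eqref{NonIntersect_SubQuasidiff}. The decisive point is symmetry: if $[\mathscr{D} f_i(\overline{x})]^+$ is disjoint from the subspace $L_i = \cl^* \linhull\{ [\mathscr{D} f_k(\overline{x})]^+ \mid k \ne i \}$ by \eqref{AlmostLI_qdMFCQ}, then, because $L_i = -L_i$, the set $-[\mathscr{D} f_i(\overline{x})]^+$ is disjoint from $L_i$ as well, so $C_i \cap \cl^* \cone\{ -C_k \mid k \ne i \} \subseteq ([\mathscr{D} f_i(\overline{x})]^+ \cup (-[\mathscr{D} f_i(\overline{x})]^+)) \cap L_i = \emptyset$. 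The inequality-constraint part follows identically from $\co\{ \underline{\partial} g_j(\overline{x}) + z_j^* \} \subseteq \co\{ [\mathscr{D} g_j(\overline{x})]^+ \}$, the cone inclusion above, and \eqref{NonIntersect_qdMFCQ}. This direction uses only that the closed span is a subspace, so weak$^*$ closedness is not needed, matching the statement.

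Necessity (spans weak$^*$ closed) I would prove by contraposition, and this is where the main obstacle lies. If \eqref{AlmostLI_qdMFCQ} fails, pick $i_0$ and a point $\overline{x}^* \in [\mathscr{D} f_{i_0}(\overline{x})]^+ \cap \linhull\{ [\mathscr{D} f_k(\overline{x})]^+ \mid k \ne i_0 \}$; closedness of the span is exactly what guarantees that $\overline{x}^*$ admits a \emph{finite} representation $\overline{x}^* = \sum_{k \ne i_0} (s_k w_k^+ - t_k w_k^-)$ with $s_k, t_k \ge 0$ and $w_k^\pm \in [\mathscr{D} f_k(\overline{x})]^+$ (group the representation by index and split positive and negative coefficients, using convexity). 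The obstacle is that for fixed data $\cone\{ -C_k \mid k \ne i_0 \}$ is a \emph{proper} subcone of the span: its general element has the form $\Lambda(-\widetilde a_k - y_k^*) + M(x_k^* + \widetilde b_k)$ with $\Lambda, M \ge 0$, so subdifferential elements can enter only with nonpositive and superdifferential elements only with nonnegative coefficients. This is overcome precisely by the freedom of choosing $x_k^*$, $y_k^*$: writing $w_k^\pm = a_k^\pm + b_k^\pm$ and choosing $x_k^* = a_k^+$, $y_k^* = b_k^-$ realizes the $k$-th summand as $t_k(- a_k^- - y_k^*) + s_k(x_k^* + b_k^+) \in \cone(-C_k)$, whence $\overline{x}^* \in \cone\{ -C_k \mid k \ne i_0 \}$; choosing $y_{i_0}^*$ equal to the superdifferential part of $\overline{x}^*$ puts $\overline{x}^* \in C_{i_0}$, so \eqref{AlmostLI_SubQuasidiff} fails for this choice and, by Proposition~\ref{prp:GeometricRegConditions}, so do assumptions \ref{Assumpt_ImplicitMFCQ1}--\ref{Assumpt_ImplicitMFCQ3}. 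Failure of \eqref{NonIntersect_qdMFCQ} is handled the same way: take $\overline{x}^*$ in the finitely represented intersection, align the $z_j^*$ with the superdifferential parts on the $\co\{[\mathscr{D} g_j(\overline{x})]^+\}$ side and the $x_i^*, y_i^*$ with the span representation on the other side.

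Finally, for the equivalence with q.d.-MFCQ I would observe that when the span in \eqref{AlmostLI_qdMFCQ} is weak$^*$ closed, that condition reads $[\mathscr{D} f_i(\overline{x})]^+ \cap \linhull\{ [\mathscr{D} f_k(\overline{x})]^+ \mid k \ne i \} = \emptyset$, which is exactly strong linear independence of the sets $[\mathscr{D} f_i(\overline{x})]^+$. For the remaining clause, $\co\{ [\mathscr{D} g_j(\overline{x})]^+ \mid j \in J(\overline{x}) \}$ is weak$^*$ compact and convex (a finite convex hull of weak$^*$ compact convex sets) while $\cl^* \linhull\{ [\mathscr{D} f_i(\overline{x})]^+ \mid i \in I \}$ is a weak$^*$ closed subspace, so the separation theorem in $(X^*, w^*)$ shows that \eqref{NonIntersect_qdMFCQ} holds if and only if there exists $v_0 \in X$ vanishing on the subspace and strictly negative on the compact set — precisely the second clause of q.d.-MFCQ (no closedness is needed for this equivalence). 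Combining the two gives the stated characterization.
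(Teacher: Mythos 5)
Your proposal is correct and follows essentially the same route as the paper: sufficiency via the inclusions $C_i \subseteq [\mathscr{D} f_i(\overline{x})]^+ \cup (-[\mathscr{D} f_i(\overline{x})]^+)$ and the symmetry of the closed span, necessity via the decomposition of the (algebraically closed) span into sums of cones over the quasidifferential sums together with an explicit choice of $x_k^*, y_k^*, z_j^*$ realizing the offending point inside $C_{i_0}$ and $\cone\{-C_k\}$, and the q.d.-MFCQ equivalence via separation in $(X^*, w^*)$. The reduction to Proposition~\ref{prp:GeometricRegConditions} is exactly the paper's strategy as well.
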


\begin{proof}
Let conditions \eqref{NonIntersect_qdMFCQ} and \eqref{AlmostLI_qdMFCQ} be satisfied. Fix any 
$x_i^* \in \underline{\partial} f_i(\overline{x})$, $y_i^* \in \overline{\partial} f_i(\overline{x})$, 
$i \in I$, and $z_j^* \in \overline{\partial} g_j(\overline{x})$, $j \in J(\overline{x})$, and denote
$C_i = ( \underline{\partial} f_i(\overline{x}) + y_i^*) \cup (- x_i^* - \overline{\partial} f_i(\overline{x}) )$.
From the definition of quasidifferential sum it follows that
\begin{gather*}
  \cone \big\{ - C_i \bigm| i \in I_0 \big\} \subseteq 
  \linhull \big\{ [\mathscr{D} f_i(\overline{x})]^+ \bigm| i \in I_0 \big\}, \\
  \co\big\{ \underline{\partial} g_j(\overline{x}) + z_j^* \bigm| j \in J(\overline{x}) \big\}
  \subseteq \co\big\{ [\mathscr{D} g_j(\overline{x})]^+ \bigm| j \in J(\overline{x}) \big\}
\end{gather*}
for any $I_0 \subseteq I$. Therefore, \eqref{NonIntersect_qdMFCQ} implies \eqref{NonIntersect_SubQuasidiff}. Observe
also
that \eqref{AlmostLI_qdMFCQ} is satisfied iff
$(- [\mathscr{D} f_i(\overline{x})]^+) \cap 
\cl^* \linhull \big\{ [\mathscr{D} f_k(\overline{x})]^+ \bigm| k \ne i \big\} = \emptyset$ for all $i \in I$.
Furthermore, $C_i \subseteq [\mathscr{D} f_i(\overline{x})]^+ \cup (-[\mathscr{D} f_i(\overline{x})]^+)$ for any 
$i \in I$ by definitions. Therefore, \eqref{AlmostLI_qdMFCQ} implies \eqref{AlmostLI_SubQuasidiff}. Hence applying
Proposition~\ref{prp:GeometricRegConditions} one obtains that assumptions
\ref{Assumpt_ImplicitMFCQ1}--\ref{Assumpt_ImplicitMFCQ3} of Theorem~\ref{thrm:ContingConeToQuasidiffSet} are satisfied
for all $x_i^* \in \underline{\partial} f_i(\overline{x})$, $y_i^* \in \overline{\partial} f_i(\overline{x})$, 
$i \in I$, and $z_j^* \in \overline{\partial} g_j(\overline{x})$, $j \in J(\overline{x})$.

Suppose now that the spans in \eqref{NonIntersect_qdMFCQ} and \eqref{AlmostLI_qdMFCQ} are weak$^*$ closed, and
assumptions \ref{Assumpt_ImplicitMFCQ1}--\ref{Assumpt_ImplicitMFCQ3} of Theorem~\ref{thrm:ContingConeToQuasidiffSet}
are satisfied for all $x_i^* \in \underline{\partial} f_i(\overline{x})$, 
$y_i^* \in \overline{\partial} f_i(\overline{x})$, $i \in I$, and $z_j^* \in \overline{\partial} g_j(\overline{x})$, 
$j \in J(\overline{x})$. Arguing by reductio ad absurdum, suppose that either \eqref{NonIntersect_qdMFCQ} or
\eqref{AlmostLI_qdMFCQ} does not hold true. Suppose at first that \eqref{NonIntersect_qdMFCQ} is not valid. Applying the
definitions of linear span and convex conic hull one can verify that
$$
  \linhull \big\{ [\mathscr{D} f_i(\overline{x})]^+ \bigm| i \in I \big\} =
  \sum_{i \in I} \cone [\mathscr{D} f_i(\overline{x})]^+ 
  + \sum_{i \in I} \cone\Big\{ - [\mathscr{D} f_i(\overline{x})]^+ \Big\}.
$$
Hence for any $j \in J(\overline{x})$ there exist $h_j^* \in \underline{\partial} g_j(\overline{x})$, 
$z_j^* \in \overline{\partial} g_j(\overline{x})$, and $\alpha_j \ge 0$, while for any $i \in I$ there exist 
$x_i^*, \widehat{x}_i^* \in \underline{\partial} f_i(\overline{x})$, 
$y_i^*, \widehat{y}_i^* \in \overline{\partial} f_i(\overline{x})$, and $\lambda_i, \mu_i \ge 0$ such that
$$
  \sum_{j \in J(\overline{x})} \alpha_j (h_j^* + z_j^*) = 
  \sum_{i \in I} \lambda_i ( x_i^* + \widehat{y}_i^* )
  - \sum_{i \in I} \mu_i ( \widehat{x}_i^* + y_i^* ),
$$
and $\sum_{j \in J(\overline{x})} \alpha_j = 1$ (here we used the fact that 
$\cone [\mathscr{D} f_i(\overline{x})]^+ = \bigcup_{t \ge 0} t [\mathscr{D} f_i(\overline{x})]^+$, since
$[\mathscr{D} f_i(\overline{x})]^+$ is a convex set). Therefore
$$
  \co\big\{ \underline{\partial} g_j(\overline{x}) + z_j^* \bigm| j \in J(\overline{x}) \big\} \cap
  \cone\big\{ x_i + \overline{\partial} f_i(\overline{x}), \: 
  - \underline{\partial} f_i(\overline{x}) - y_i^* \bigm| i \in I \big\} \ne \emptyset,
$$
which is impossible by Proposition~\ref{prp:GeometricRegConditions}. Arguing in a similar way one can check that
if \eqref{AlmostLI_qdMFCQ} is not valid, then there exists $x_i^* \in \underline{\partial} f_i(\overline{x})$, 
$y_i^* \in \overline{\partial} f_i(\overline{x})$, $i \in I$ for which \eqref{AlmostLI_SubQuasidiff} does not holds
true, which is, once again, impossible by Proposition~\ref{prp:GeometricRegConditions}.

It remains to note that if the span in \eqref{AlmostLI_qdMFCQ} is weak$^*$ closed for all $i \in I$, then by definition
\eqref{AlmostLI_qdMFCQ} means that the sets $[\mathscr{D} f_i(\overline{x})]^+$, $i \in I$, are strongly linearly
independent. In turn, \eqref{NonIntersect_qdMFCQ} implies the validity of the second condition of q.d.-MFCQ 
(the existence of $v_0$) by the separation theorem, while the validity of the converse implication follows directly from
definitions.
\end{proof}

\begin{remark} \label{rmrk:qdMFCQ}
A weak form of the Mangasarian-Fromovitz constraint qualification in terms of quasidifferentials, in which the strong
linear independence of $[\mathscr{D} f_i(\overline{x})]^+$, $i \in I$, is replaced by their linear independence, was
first introduced by the author in \cite{Dolgopolik_MetricReg} for an analysis of the metric regularity of
quasidifferentiable mappings. In the case when $X = \mathbb{R}^n$ and there are no equality constraints it was utilized
in \cite{LudererRosiger90,LudererRosiger91,KuntzScholtes} for an analysis of optimality conditions, while in the case
when $X = \mathbb{R}^n$ and there are no inequality constraints a similar condition was proposed by Demyanov
\cite{Demyanov_NewtMeth} for the study of implicit functions and a nonsmooth version of Newton's method.
\end{remark}

Let us give several simple corollaries to Theorem~\ref{thrm:ContingConeToQuasidiffSet}. At first, note that
this theorem obviously remains valid if there are no equality or there are no inequality constraints. Furthermore, 
an analysis of the proof of Theorem~\ref{thrm:ContingConeToQuasidiffSet} indicates that when there are no equality
constraints the assumption that $g_j$ are d.d. uniformly along finite dimensional spaces is unnecessary (in this case
one defines $\eta(\cdot) \equiv 0$). 

\begin{corollary} \label{crlr:ContingCone_EqualConstr}
Let the functions $f_i$, $i \in I$, be continuous in a neighbourhood of a point $\overline{x} \in M$,
quasidifferentiable at this point uniformly along finite dimensional spaces, and let $J = \emptyset$. 
Let also $x_i^* \in \underline{\partial} f_i(\overline{x})$ and $y_i^* \in \overline{\partial} f_i(\overline{x})$, 
$i \in I$ be such that \eqref{AlmostLI_SubQuasidiff} holds true (in particular, if $m = 1$, then it is sufficient to
suppose that $0 \notin \partial f_1(\overline{x}) + y_1^*$ 
and $0 \notin x_1^* + \overline{\partial} f_1(\overline{x})$). Then
$$
  \Big\{ v \in X \Bigm| s\big( \underline{\partial} f_i(\overline{x}) + y_i^*, v \big) \le 0, \:
  s\big( - x_i^* - \overline{\partial} f_i(\overline{x}), v \big) \le 0, \: i \in I \Big\}
  \subseteq T_M(\overline{x}).
$$
\end{corollary}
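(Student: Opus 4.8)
The plan is to obtain this as a direct specialisation of Theorem~\ref{thrm:ContingConeToQuasidiffSet} to the situation $J = \emptyset$, so that the index set of active inequality constraints is $J(\overline{x}) = \emptyset$ and every condition in the theorem that refers to the functions $g_j$ becomes vacuous. The only things that genuinely need checking are (i) that the three assumptions \ref{Assumpt_ImplicitMFCQ1}--\ref{Assumpt_ImplicitMFCQ3} of the theorem follow from the single hypothesis \eqref{AlmostLI_SubQuasidiff}, and (ii) that the right-hand side of the inclusion \eqref{ConvexSubcone} collapses to the cone written in the corollary once the (empty) family of inequality constraints is removed.

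First I would dispose of assumption~\ref{Assumpt_ImplicitMFCQ3}. Since $J(\overline{x}) = \emptyset$, the requirement $s(\underline{\partial} g_j(\overline{x}) + z_j^*, v_0) < 0$ for all $j \in J(\overline{x})$ is satisfied vacuously, so it suffices to produce a single $v_0 \in X$ with $s(\underline{\partial} f_i(\overline{x}) + y_i^*, v_0) \le 0$ and $s(-x_i^* - \overline{\partial} f_i(\overline{x}), v_0) \le 0$ for every $i \in I$. The choice $v_0 = 0$ works, because $\langle x^*, 0\rangle = 0$ and hence $s(C, 0) = 0$ for each of these nonempty weak$^*$ compact sets $C$; thus assumption~\ref{Assumpt_ImplicitMFCQ3} holds. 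This is also consistent with the proof of Theorem~\ref{thrm:ContingConeToQuasidiffSet}, where $v_0$ enters only through the perturbation $v + \gamma v_0$ and through the inequality-constraint estimate \eqref{IneqConstrDD_UpperEstim}, the latter being vacuous here; with $v_0 = 0$ the proof yields $v \in T_M(\overline{x})$ outright.

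Next I would invoke Proposition~\ref{prp:GeometricRegConditions}. Its proof establishes, independently of the inequality data, that assumptions~\ref{Assumpt_ImplicitMFCQ1} and~\ref{Assumpt_ImplicitMFCQ2} together are equivalent to the nonintersection of each $C_i$ with $\cl^*\cone\{-C_k \mid k \ne i\}$, i.e. to condition \eqref{AlmostLI_SubQuasidiff}; here $C_i = (\underline{\partial} f_i(\overline{x}) + y_i^*) \cup (-x_i^* - \overline{\partial} f_i(\overline{x}))$. Since the $x_i^*, y_i^*$ are fixed once and for all by hypothesis, the caveat flagged in the remark after the theorem (that \ref{Assumpt_ImplicitMFCQ1} and \ref{Assumpt_ImplicitMFCQ2} must hold for the \emph{same} elements) causes no trouble. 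As \eqref{AlmostLI_SubQuasidiff} is assumed, assumptions~\ref{Assumpt_ImplicitMFCQ1} and~\ref{Assumpt_ImplicitMFCQ2} follow. All three assumptions of Theorem~\ref{thrm:ContingConeToQuasidiffSet} are therefore in force, and its conclusion \eqref{ConvexSubcone}, after deleting the now-empty family of constraints indexed by $J(\overline{x})$, is exactly the asserted inclusion.

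Finally, for the parenthetical $m = 1$ claim I would simply cite the remark following Proposition~\ref{prp:GeometricRegConditions}, which records that for $I = \{1\}$ condition \eqref{AlmostLI_SubQuasidiff} reduces to $0 \notin C_1$, i.e. $0 \notin \underline{\partial} f_1(\overline{x}) + y_1^*$ and $0 \notin -x_1^* - \overline{\partial} f_1(\overline{x})$; the second is the same as $0 \notin x_1^* + \overline{\partial} f_1(\overline{x})$. Because the whole argument is a specialisation of an already-proved result, I do not expect a real obstacle: the only points requiring care are the bookkeeping that the inequality data are empty and the observation that $v_0 = 0$ is admissible in assumption~\ref{Assumpt_ImplicitMFCQ3}, which at first glance appears to demand a strict inequality that is in fact vacuous here.
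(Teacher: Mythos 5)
Your proposal is correct and follows exactly the route the paper intends: the corollary is presented as a direct specialisation of Theorem~\ref{thrm:ContingConeToQuasidiffSet} to $J=\emptyset$, with Proposition~\ref{prp:GeometricRegConditions} translating \eqref{AlmostLI_SubQuasidiff} into assumptions~\ref{Assumpt_ImplicitMFCQ1}--\ref{Assumpt_ImplicitMFCQ2}, and your explicit observation that $v_0=0$ satisfies the now-vacuous assumption~\ref{Assumpt_ImplicitMFCQ3} is precisely the bookkeeping the paper leaves implicit.
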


\begin{corollary}
Let $\overline{x} \in M$ be a given point and $I = \emptyset$. Suppose that the functions $g_j$, 
$j \in J(\overline{x})$, are quasidifferentiable at $\overline{x}$, the functions $g_j$, $j \notin J(\overline{x})$, are
upper semicontinuous (u.s.c.) at this point. Let also $z_j^* \in \overline{\partial} g_j(\overline{x})$, $j \in
J(\overline{x})$, be such that
$0 \notin \co\{ \underline{\partial} g_j(\overline{x}) + z_j^* \mid j \in J(\overline{x}) \}$. Then
$$
  \Big\{ v \in X \Bigm| s\big( \underline{\partial} g_j(\overline{x}) + z_j^*, v \big) \le 0, \:
  j \in J(\overline{x}) \Big\} \subseteq T_M(\overline{x}).
$$
\end{corollary}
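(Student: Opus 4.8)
The plan is to deduce this corollary directly from Theorem~\ref{thrm:ContingConeToQuasidiffSet} in the degenerate case $I = \emptyset$, invoking the remark preceding it to drop the requirement that the $g_j$ be d.d.\ uniformly along finite dimensional spaces (one simply sets $\eta \equiv 0$). When there are no equality constraints, assumptions~\ref{Assumpt_ImplicitMFCQ1} and \ref{Assumpt_ImplicitMFCQ2} are vacuously satisfied, while assumption~\ref{Assumpt_ImplicitMFCQ3} collapses to the single requirement that there exist $v_0 \in X$ with $s( \underline{\partial} g_j(\overline{x}) + z_j^*, v_0) < 0$ for every $j \in J(\overline{x})$, the inequalities involving the $f_i$ being absent. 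The conclusion of the theorem likewise reduces to exactly the inclusion claimed here, and the u.s.c.\ hypothesis on the $g_j$ with $j \notin J(\overline{x})$ is carried over verbatim. Hence the entire task is to show that the hypothesis $0 \notin \co\{ \underline{\partial} g_j(\overline{x}) + z_j^* \mid j \in J(\overline{x}) \}$ produces such a $v_0$.

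First I would observe that each set $\underline{\partial} g_j(\overline{x}) + z_j^*$ is weak$^*$ compact and convex, and that the finite set $J(\overline{x})$ guarantees that $K := \co\{ \underline{\partial} g_j(\overline{x}) + z_j^* \mid j \in J(\overline{x}) \}$ is itself weak$^*$ compact: it is the image of the compact product $\Delta \times \prod_{j \in J(\overline{x})} (\underline{\partial} g_j(\overline{x}) + z_j^*)$ (here $\Delta$ denotes the standard simplex indexed by $J(\overline{x})$) under the weak$^*$-continuous affine map $(\lambda, (x_j^*)) \mapsto \sum_{j} \lambda_j x_j^*$. Since $\{ 0 \}$ is closed convex and disjoint from the weak$^*$ compact convex set $K$, the Hahn-Banach separation theorem applied in the space $X^*$ endowed with the weak$^*$ topology yields strict separation. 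Because the continuous linear functionals on this space are precisely the evaluations $x^* \mapsto \langle x^*, v \rangle$ with $v \in X$, there exists $v_0 \in X$ such that $\langle x^*, v_0 \rangle < 0$ for all $x^* \in K$. As each $\underline{\partial} g_j(\overline{x}) + z_j^* \subseteq K$ is weak$^*$ compact, the supremum defining $s(\underline{\partial} g_j(\overline{x}) + z_j^*, v_0)$ is attained and thus strictly negative for every $j \in J(\overline{x})$. This is exactly assumption~\ref{Assumpt_ImplicitMFCQ3}, so the theorem applies and delivers the inclusion.

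The only genuinely delicate point will be the weak$^*$ compactness of the convex hull $K$: in infinite dimensions the convex hull of a compact set need not be compact, so I must exploit that $K$ is the convex hull of \emph{finitely many} compact convex sets, realizing it as a continuous image of a compact product, rather than appealing to any general principle. Once that is in hand, the remaining separation argument is standard and everything else is a routine specialization of Theorem~\ref{thrm:ContingConeToQuasidiffSet}.
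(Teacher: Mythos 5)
Your proof is correct and follows exactly the route the paper intends: the corollary is stated as an immediate specialization of Theorem~\ref{thrm:ContingConeToQuasidiffSet} with $I = \emptyset$ (so that assumptions~\ref{Assumpt_ImplicitMFCQ1}--\ref{Assumpt_ImplicitMFCQ2} are vacuous and $\eta \equiv 0$ removes the uniform directional differentiability requirement), with assumption~\ref{Assumpt_ImplicitMFCQ3} recovered from $0 \notin \co\{ \underline{\partial} g_j(\overline{x}) + z_j^* \mid j \in J(\overline{x}) \}$ by weak$^*$ separation, exactly as in the proof of Proposition~\ref{prp:GeometricRegConditions}. Your explicit verification that the convex hull of finitely many weak$^*$ compact convex sets is weak$^*$ compact is a welcome detail the paper leaves implicit.
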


Theorem~\ref{thrm:ContingConeToQuasidiffSet} can also be utilized to describe the contingent cone $T_M(\overline{x})$
in terms of the directional derivatives of the functions $f_i$ and $g_j$ in the case when these functions are Hadamard
directionally differentiable. Recall that a function $f \colon X \to \mathbb{R}$ is called \textit{Hadamard d.d.} at 
a point $x \in X$, if for any $v \in X$ there exists the finite limit
$$
  f'(x, v) = \lim_{[\alpha, v'] \to [+0, v]} \frac{f(x + \alpha v') - f(x)}{\alpha}
$$
Note that when $f$ is Hadamard d.d. at $x$, $f'(x, v)$ coincides with the usual directional derivative.

\begin{corollary} \label{crlr:ContingetCone_via_DirectDeriv}
Let the functions $f_i$, $i \in I$, be continuous in a neighbourhood of a point $\overline{x} \in M$, the functions
$g_j$, $j \notin J(\overline{x})$, be upper semicontinuous (u.s.c.) at this point, and let $f_i$, $i \in I$, and $g_j$, 
$j \in J(\overline{x})$, be quasidifferentiable at $\overline{x}$ uniformly along finite dimensional spaces. Suppose
also that assumptions \eqref{AlmostLI_qdMFCQ} and \eqref{NonIntersect_qdMFCQ} are satisfied.
Then
\begin{equation} \label{ContingentCone_via_DirectDeriv}
  \Big\{ v \in X \Bigm| f'_i(\overline{x}, v) = 0, \: i \in I, \: 
  g'_j(\overline{x}, v) \le 0, \: j \in J(\overline{x}) \Big\} \subseteq T_M(\overline{x}),
\end{equation}
Moreover, the opposite inclusion holds true, provided $f_i$, $i \in I$, and $g_j$, $j \in J(\overline{x})$, are
Hadamard d.d. at $\overline{x}$ (in particular, if they are Lipschitz continuous near this point).
\end{corollary}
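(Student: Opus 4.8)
The plan is to deduce both inclusions from the machinery already developed, obtaining the forward inclusion \eqref{ContingentCone_via_DirectDeriv} from Theorem~\ref{thrm:ContingConeToQuasidiffSet} via a direction-dependent choice of the vectors $x_i^*$, $y_i^*$, $z_j^*$, and the reverse inclusion directly from the definition of the contingent cone together with Hadamard directional differentiability.

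For the forward inclusion, I would fix an arbitrary $v$ in the set on the left-hand side of \eqref{ContingentCone_via_DirectDeriv} and construct, depending on this particular $v$, elements at which the support-function conditions of \eqref{ConvexSubcone} hold. Since $\underline{\partial} f_i(\overline{x})$, $\overline{\partial} f_i(\overline{x})$, and $\overline{\partial} g_j(\overline{x})$ are weak$^*$ compact and $\langle \cdot, v \rangle$ is weak$^*$ continuous, I choose $x_i^*$ to attain $\max_{x^* \in \underline{\partial} f_i(\overline{x})} \langle x^*, v \rangle$, and choose $y_i^*$, $z_j^*$ to attain $\min_{y^* \in \overline{\partial} f_i(\overline{x})} \langle y^*, v \rangle$ and $\min_{z^* \in \overline{\partial} g_j(\overline{x})} \langle z^*, v \rangle$, respectively. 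Using the representation \eqref{eq:QuasidiffDef} of the directional derivative, this choice gives $s(\underline{\partial} f_i(\overline{x}) + y_i^*, v) = f_i'(\overline{x}, v) = 0$, $s(- x_i^* - \overline{\partial} f_i(\overline{x}), v) = - f_i'(\overline{x}, v) = 0$, and $s(\underline{\partial} g_j(\overline{x}) + z_j^*, v) = g_j'(\overline{x}, v) \le 0$; hence $v$ lies in the cone \eqref{ConvexSubcone} associated with these particular $x_i^*$, $y_i^*$, $z_j^*$.

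It remains to check that Theorem~\ref{thrm:ContingConeToQuasidiffSet} is applicable to this choice. Since assumptions \eqref{AlmostLI_qdMFCQ} and \eqref{NonIntersect_qdMFCQ} are postulated, Proposition~\ref{prp:Equiv_qdMFCQ} guarantees that assumptions \ref{Assumpt_ImplicitMFCQ1}--\ref{Assumpt_ImplicitMFCQ3} hold for \emph{all} admissible $x_i^*$, $y_i^*$, $z_j^*$, and in particular for the $v$-dependent ones selected above. Therefore Theorem~\ref{thrm:ContingConeToQuasidiffSet} yields that the whole cone \eqref{ConvexSubcone} for this choice is contained in $T_M(\overline{x})$; as $v$ belongs to that cone, $v \in T_M(\overline{x})$. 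Since $v$ was arbitrary, \eqref{ContingentCone_via_DirectDeriv} follows. The only subtlety is that the extremizers depend on $v$, which is harmless precisely because the constraint qualification has been arranged to be valid uniformly over all choices of quasidifferential elements.

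For the reverse inclusion under Hadamard directional differentiability, I would take any $v \in T_M(\overline{x})$ and pick sequences $\alpha_n \to +0$ and $v_n \to v$ with $\overline{x} + \alpha_n v_n \in M$. Then $f_i(\overline{x} + \alpha_n v_n) = 0 = f_i(\overline{x})$ for $i \in I$ and $g_j(\overline{x} + \alpha_n v_n) \le 0 = g_j(\overline{x})$ for $j \in J(\overline{x})$, so the difference quotients $(f_i(\overline{x} + \alpha_n v_n) - f_i(\overline{x}))/\alpha_n$ vanish identically while $(g_j(\overline{x} + \alpha_n v_n) - g_j(\overline{x}))/\alpha_n \le 0$. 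Passing to the limit and invoking the Hadamard directional differentiability of $f_i$ and $g_j$ (so that these difference quotients converge to $f_i'(\overline{x}, v)$ and $g_j'(\overline{x}, v)$ along $\alpha_n \to +0$, $v_n \to v$), I obtain $f_i'(\overline{x}, v) = 0$ for all $i \in I$ and $g_j'(\overline{x}, v) \le 0$ for all $j \in J(\overline{x})$, i.e. $v$ belongs to the left-hand side of \eqref{ContingentCone_via_DirectDeriv}. This gives the opposite inclusion, and hence equality, whenever $f_i$ and $g_j$ are Hadamard d.d. (in particular under local Lipschitz continuity). I do not anticipate a genuine obstacle in either part; the forward inclusion is the more delicate one, and its crux is the observation that the max/min structure of the quasidifferential lets one realize the equality/inequality constraints on the directional derivatives as support-function inequalities for a suitably chosen, direction-dependent selection of $x_i^*$, $y_i^*$, $z_j^*$.
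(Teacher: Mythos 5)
Your proof is correct and follows essentially the same route as the paper: for the forward inclusion the paper likewise picks, for each fixed $v$, the attaining elements $x_i^*$, $y_i^*$, $z_j^*$ in \eqref{eq:QuasidiffDef} so that the support-function conditions of \eqref{ConvexSubcone} reduce to $f_i'(\overline{x},v)=0$ and $g_j'(\overline{x},v)\le 0$, then invokes Proposition~\ref{prp:Equiv_qdMFCQ} and Theorem~\ref{thrm:ContingConeToQuasidiffSet}; the reverse inclusion is the same difference-quotient argument via Hadamard directional differentiability.
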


\begin{proof}
Let $v \in X$ belong to the left-hand side of \eqref{ContingentCone_via_DirectDeriv}. By the definition of
quasidifferential one has
$$
  f_i'(\overline{x}, v) = \max_{x^* \in \underline{\partial} f_i(\overline{x})} \langle x^*, v \rangle
  + \min_{y^* \in \overline{\partial} f_i(\overline{x})} \langle y^*, v \rangle
$$
(the maximum and the minimum are attained due to the fact that the sets $\underline{\partial} f_i(\overline{x})$ and
$\overline{\partial} f_i(\overline{x})$ are weak${}^*$ compact). Hence for any $i \in I$ there exist
$x_i^* \in \underline{\partial} f_i(\overline{x})$ and $y_i^* \in \overline{\partial} f_i(\overline{x})$ such that
$s(\underline{\partial} f_i(\overline{x}) + y_i^*, v) = 0$ and 
$s(- x_i^* -\overline{\partial} f_i(\overline{x}), v) = 0$. Similarly, for any $j \in J(\overline{x})$ there exists
$z_j^* \in \overline{\partial} g_j(\overline{x})$ such that 
$s(\underline{\partial} g_j(\overline{x}) + z_j^*, v) \le 0$. Consequently, applying
Proposition~\ref{prp:Equiv_qdMFCQ} and Theorem~\ref{thrm:ContingConeToQuasidiffSet} one obtains that
$v \in T_M(\overline{x})$.

Let us prove the converse inclusion. Choose $v \in T_M(\overline{x})$. By definition there exist sequences 
$\{ \alpha_n \} \subset (0, + \infty)$ and $\{ v_n \} \subset X$ such that $\alpha_n \to 0$ and $v_n \to v$ as 
$n \to + \infty$, and $\overline{x} + \alpha_n v_n \in M$ for all $n \in \mathbb{N}$. Fix any $i \in I$. By our
assumption $f_i$ is Hadamard d.d. at $\overline{x}$. Therefore
$$
  f'_i(\overline{x}, v) = \lim_{n \to \infty} \frac{f_i(\overline{x} + \alpha_n v_n) - f_i(\overline{x})}{\alpha_n} = 0,
$$
(here we used the fact that $f_i(\overline{x} + \alpha_n v_n) = 0$ for all $n \in \mathbb{N}$, since 
$\overline{x} + \alpha_n v_n \in M$). Similarly, from the fact that $\overline{x} + \alpha_n v_n \in M$ for all 
$n \in \mathbb{N}$ and the function $g_j$, $j \in J(\overline{x})$ is Hadamard d.d. at $\overline{x}$ it follows that 
$g_j'(\overline{x}, v) \le 0$. Thus, $f_i'(\overline{x}, v) = 0$ for any $i \in I$ and $g_j'(\overline{x}, v) \le 0$ for
any $j \in J(\overline{x})$, i.e. $v$ belongs to the left-hand side of \eqref{ContingentCone_via_DirectDeriv}, which
completes the proof.
\end{proof}

Let us finally present two simple examples illustrating Theorem~\ref{thrm:ContingConeToQuasidiffSet}.

\begin{example}
Let $X = \mathbb{R}^2$, $\overline{x} = 0$, and 
$$
  M = \Big\{ x = (x^1, x^2)^T \in \mathbb{R}^2 \Bigm| f(x) = |x^1| - x^2 = 0, \quad g(x) = x^1 \le 0 \Big\}.
$$
The functions $f$ and $g$ are quasidifferentiable at $\overline{x}$ and one can define
\begin{gather*}
  \underline{\partial} f(\overline{x}) = \co\left\{ \begin{pmatrix} 1 \\ -1 \end{pmatrix},
  \begin{pmatrix} - 1 \\ -1 \end{pmatrix} \right\}, \quad
  \overline{\partial} f(\overline{x}) = \left\{ \begin{pmatrix} 0 \\ 0 \end{pmatrix} \right\}, \\
  \underline{\partial} g(\overline{x}) = \left\{ \begin{pmatrix} 1 \\ 0 \end{pmatrix} \right\}, \quad
  \overline{\partial} g(\overline{x}) = \left\{ \begin{pmatrix} 0 \\ 0 \end{pmatrix} \right\}.
\end{gather*}
Observe that 
$$
  [\mathscr{D} f(\overline{x})]^+ = \co\left\{ \begin{pmatrix} 1 \\ -1 \end{pmatrix}, 
  \begin{pmatrix} -1 \\ -1 \end{pmatrix} \right\}, \quad
  [\mathscr{D} g(\overline{x})]^+ = \left\{ \begin{pmatrix} 1 \\ 0 \end{pmatrix} \right\}.
$$
Thus, $\linhull [\mathscr{D} f(\overline{x})]^+ = \mathbb{R}^2$, and q.d.-MFCQ is not satisfied at $\overline{x}$.
Nevertheless, Theorem~\ref{thrm:ContingConeToQuasidiffSet} enables us to compute the entire cone $T_M(\overline{x})$.
Indeed, put $x^* = (-1, -1)^T \in \underline{\partial} f(\overline{x})$ and
$y^* = 0 \in \overline{\partial} f(\overline{x})$. Then $0 \notin x^* + \overline{\partial} f(\overline{x})$ and
$0 \notin \underline{\partial} f(\overline{x}) + y^*$. Define $z^* = 0 \in \overline{\partial} g(\overline{x})$. Then
for $v_0 = (-1, 1)^T$ one has 
$$
  s(\underline{\partial} g(\overline{x}) + z^*, v_0) = -1 < 0, \enspace
  s(\underline{\partial} f(\overline{x}) + y^*, v_0) = 0, \enspace
  s(- x^* - \underline{\partial} f(\overline{x}), v_0) = 0.
$$
Thus, all assumptions of Theorem~\ref{thrm:ContingConeToQuasidiffSet} are satisfied for the chosen vectors $x^*$, $y^*$,
and $z^*$. Consequently, by this theorem the cone
\begin{gather*}
  \Big\{ v \in \mathbb{R}^2 \Bigm| s(\underline{\partial} f(\overline{x}) + y^*, v) \le 0, \enspace
  s(- x^* - \overline{\partial} f(\overline{x}), v) \le 0, \enspace
  s(\underline{\partial} g(\overline{x}) + z^*, v) \le 0 \Big\} \\
  = \big\{ v \in \mathbb{R}^2 \bigm| |v^1| - v^2 \le 0, \enspace v^1 + v^2 \le 0, \enspace v^1 \le 0 \big\}
  = \big\{ (-t, t)^T \in \mathbb{R}^2 \bigm| t \ge 0 \big\}
\end{gather*}
is contained in $T_M(\overline{x})$. It remains to note that, in actuality, this cone coincides with $M$ and
$T_M(\overline{x})$.
\end{example}

\begin{example} \label{example:CrossEqConstr}
Let $X = \mathbb{R}^2$, $\overline{x} = 0$, and 
$$
  M = \Big\{ x = (x^1, x^2)^T \in \mathbb{R}^2 \Bigm| f(x) = |\sin x^1| - |\sin x^2| = 0 \Big\}.
$$
Applying standard rules of quasidifferential calculus (see, e.g. \cite{DemRub_book}) one can easily check that $f$ is
quasidifferentiable at $\overline{x}$ and one can define 
$\mathscr{D} f(0) = [ \underline{\partial} f(0), \overline{\partial} f(0) ]$ with
\begin{equation} \label{SimplestDCFunc_Quasidiff}
  \underline{\partial} f(0) = \co\left\{ \begin{pmatrix} 1 \\ 0 \end{pmatrix},
  \begin{pmatrix} - 1 \\ 0 \end{pmatrix} \right\}, \quad
  \overline{\partial} f(0) = \co\left\{ \begin{pmatrix} 0 \\ 1 \end{pmatrix},
  \begin{pmatrix} 0 \\ -1 \end{pmatrix} \right\}.
\end{equation}
Observe that $[\mathscr{D} f(0)]^+ = \{ x \in \mathbb{R}^2 \mid \max\{ |x^1|, |x^2| \} \le 1 \}$, i.e. q.d.-MFCQ is not
satisfied at $\overline{x}$, since $0 \in [\mathscr{D} f(0)]^+$. Nevertheless, as in the previous example,
Theorem~\ref{thrm:ContingConeToQuasidiffSet} still allows one to compute the entire contingent cone $T_M(0)$. Indeed,
denote $x_{\pm}^* = (\pm 1, 0)^T$ and $y_{\pm}^* = ( 0, \pm 1)^T$. Clearly, 
$0 \notin \underline{\partial} f(0) + y_{\pm}^*$ and $0 \notin \overline{\partial} f(0) + x_{\pm}^*$. Therefore,
applying Corollary~\ref{crlr:ContingCone_EqualConstr} one gets that $K_i \subset T_M(0)$, $1 \le i \le 4$, where
\begin{align*}
  K_1 &= \Big\{ v \in \mathbb{R}^2 \Bigm| s(\underline{\partial} f(0) + y_+^*, v) \le 0, \:
  s(- x_+^* - \overline{\partial} f(0), v) \le 0 \Big\} \\
  &= \big\{ v \in \mathbb{R}^2 \bigm| |v^1| + v^2 \le 0, \: - v^1 + |v^2| \le 0 \big\}
  = \big\{ (t, -t)^T \in \mathbb{R}^2 \bigm| t \ge 0 \big\}, \\
  K_2 &= \Big\{ v \in \mathbb{R}^2 \Bigm| s(\underline{\partial} f(0) + y_+^*, v) \le 0, \:
  s(- x_-^* - \overline{\partial} f(0), v) \le 0 \Big\} \\
  &= \big\{ v \in \mathbb{R}^2 \bigm| |v^1| + v^2 \le 0, \: v^1 + |v^2| \le 0 \big\}
  = \big\{ (-t, -t)^T \in \mathbb{R}^2 \bigm| t \ge 0 \big\}, \\
  K_3 &= \Big\{ v \in \mathbb{R}^2 \Bigm| s(\underline{\partial} f(0) + y_-^*, v) \le 0, \:
  s(- x_+^* - \overline{\partial} f(0), v) \le 0 \Big\} \\
  &= \big\{ v \in \mathbb{R}^2 \bigm| |v^1| - v^2 \le 0, \: - v^1 + |v^2| \le 0 \big\}
  = \big\{ (t, t)^T \in \mathbb{R}^2 \bigm| t \ge 0 \big\}, \\
  K_4 &= \Big\{ v \in \mathbb{R}^2 \Bigm| s(\underline{\partial} f(0) + y_-^*, v) \le 0, \:
  s(- x_-^* - \overline{\partial} f(0), v) \le 0 \Big\} \\
  &= \big\{ v \in \mathbb{R}^2 \bigm| |v^1| - v^2 \le 0, \: v^1 + |v^2| \le 0 \big\}
  = \big\{ (-t, t)^T \in \mathbb{R}^2 \bigm| t \ge 0 \big\}.
\end{align*}
One can verify that $T_M(0) = \{ v \in \mathbb{R}^2 \mid |v^1| - |v^2| = 0 \} = \cup_{i = 1}^4 K_i$.
\end{example}

\begin{remark}
The main results of this section can be easily rewritten in terms of \textit{upper convex and lower concave
approximations} of the directional derivatives of the functions $f_i$ and $g_j$ and thus extended to the case when
the functions $f_i$ and $g_j$ are just directionally differentiable (but not necessarily quasidifferentiable). Recall
that a continuous sublinear function $p \colon X \to \mathbb{R}$ is called an upper convex approximation (u.c.a.) of a
positively homogeneous function $h \colon X \to \mathbb{R}$, if $p(v) \ge h(v)$ for all $v \in X$, while a continuous
superlinear function $q \colon X \to \mathbb{R}$ is called a lower concave approximation (l.c.a.) of $h$, 
if $q(v) \le h(v)$ for all $v \in X$. Note that if a function $f$ is quasidifferentiable at a point $x$, then by the
definition of quasidifferential \eqref{eq:QuasidiffDef} for any  $y^* \in \overline{\partial} f(x)$ the function 
$p(\cdot) = s(\underline{\partial} f(x) + y^*, \cdot)$ is an u.c.a. of $f'(x, \cdot)$, while 
for any $x^* \in  \underline{\partial} f(x)$ the function $q(\cdot) = -s(- x^* - \overline{\partial} f(x), \cdot)$ is a
l.c.a. of $f'(x, \cdot)$. However, a function need not be quasidifferentiable to admit upper convex and lower concave
approximations of its directional derivative (see \cite{DemRub_book,Quasidifferentiability_book} for more details).

Suppose that the functions $f_i$ and $g_j$ are directionally differentiable at a point $\overline{x}$. Let $p_i$ be
u.c.a. of $f'_i(\overline{x}, \cdot)$, $q_i$ be l.c.a. of $f'_i(\overline{x}, \cdot)$, $i \in I$, and $u_j$ be u.c.a. of
$g'_j(\overline{x}, \cdot)$, $j \in J(\overline{x})$. Then assumption~\ref{Assumpt_ImplicitMFCQ1} of
Theorem~\ref{thrm:ContingConeToQuasidiffSet} can be rewritten as follows: there exists $v_i \in X$ such that 
$p_i(v_i) < 0$ and for any $k \ne i$ one has $p_k(v_i) \le 0$ and $q_k(v_i) \ge 0$.
Assumptions~\ref{Assumpt_ImplicitMFCQ2} and \ref{Assumpt_ImplicitMFCQ3} of this theorem can be rewritten in a similar
way. Then making necessary changes in the formulation of Theorem~\ref{thrm:ContingConeToQuasidiffSet} and almost
literally repeating its proof one can verify the validity of following inclusion:
$$
  \Big\{ v \in X \Bigm| p_i(v) \le 0, \: q_i(v) \ge 0 \quad \forall i \in I, 
  u_j(v) \le 0 \quad \forall j \in J(\overline{x}) \Big\} \subseteq T_M(\overline{x}).
$$
Optimality conditions from the following section can also be rewritten in terms of u.c.a. of the objective function and
inequality constraints and u.c.a. and l.c.a. of equality constraints. We leave the details to the interested reader.
\end{remark}

\section{Optimality Conditions for Quasidifferentiable Programming Problems}
\label{Section_OptimalityConditions}

In this section we derive the strongest existing necessary optimality conditions for nonsmooth nonlinear programming
problems with quasidifferentiable objective function and constraints under less restrictive assumptions than in
previous studies. Our derivation of these optimality conditions is based on the description of convex subcones of 
the contingent cone given in Theorem~\ref{thrm:ContingConeToQuasidiffSet}.

Consider the following optimization problem
\begin{equation} \label{QuasidiffProgramProblem}
  \min \: f_0(x) \quad \text{s.t.} \quad f_i(x) = 0 \quad \forall i \in I, \quad
  g_j(x) \le 0 \quad \forall j \in J,
\end{equation}
where $f_0, f_i, g_j \colon X \to \mathbb{R}$ are given functions, $I = \{ 1, \ldots, m \}$, and
$J = \{ 1, \ldots, l \}$. Recall that $J(x) = \{ j \in J \mid g_j(x) = 0 \}$.

\begin{theorem} \label{thrm:QuasidiffProgram_OptCond}
Let $\overline{x}$ be a locally optimal solution of problem \eqref{QuasidiffProgramProblem} and the following
assumptions be valid:
\begin{enumerate}
\item{$f_0$ is quasidifferentiable and Hadamard d.d. at $\overline{x}$;}

\item{the functions $f_i$, $i \in I$, are continuous in a neighbourhood of $\overline{x}$ and quasidifferentiable
at $\overline{x}$ uniformly along finite dimensional spaces;
}

\item{the functions $g_j$, $j \notin J(\overline{x})$, are u.s.c. and quasidifferentiable at $\overline{x}$, while the
functions $g_j$, $j \in J(\overline{x})$ are quasidifferentiable at $\overline{x}$ uniformly along finite dimensional
spaces;}

\item{vectors $x_i^* \in \underline{\partial} f_i(\overline{x})$, $y_i^* \in \overline{\partial} f_i(\overline{x})$, 
$i \in I$, and $z_j^* \in \overline{\partial} g_j(\overline{x})$, $j \in J(\overline{x})$, satisfy assumptions
\ref{Assumpt_ImplicitMFCQ1}--\ref{Assumpt_ImplicitMFCQ3} of Theorem~\ref{thrm:ContingConeToQuasidiffSet}.
} 
\end{enumerate}
Then for any $y_0^* \in \overline{\partial} f_0(\overline{x})$ and $z_j^* \in \overline{\partial} g_j(\overline{x})$, 
$j \notin J(\overline{x})$, there exist $\lambda_j \ge 0$, $j \in J$, such that $\lambda_j g_j(\overline{x}) = 0$ for
any $j \in J$ and
\begin{equation} \label{QuasidiffProg_OptCond_WithCone}
  0 \in \underline{\partial} f_0(\overline{x}) + y_0^* 
  + \sum_{j = 1}^l \lambda_j \big( \underline{\partial} g_j(\overline{x}) + z_j^* \big) 
  + \cl^* \cone\big\{ C_i \mid i \in I \big\},
\end{equation}
where $C_i = (\underline{\partial} f_i(\overline{x}) + y_i^*) \cup (- x_i^* - \overline{\partial} f_i(\overline{x}))$.
\end{theorem}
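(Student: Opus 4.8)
The plan is to transfer the local optimality of $\overline{x}$ into a sign condition on directional derivatives over the convex subcone produced by Theorem~\ref{thrm:ContingConeToQuasidiffSet}, and then to extract the multipliers from the subdifferential at the origin of a nonnegative sublinear Lagrangian. Fix $y_0^* \in \overline{\partial} f_0(\overline{x})$ and put $p_0(\cdot) = s(\underline{\partial} f_0(\overline{x}) + y_0^*, \cdot)$. First I would show that $f_0'(\overline{x}, v) \ge 0$ for every $v \in T_M(\overline{x})$: choosing $\alpha_n \to +0$ and $v_n \to v$ with $\overline{x} + \alpha_n v_n \in M$, the local optimality gives $f_0(\overline{x} + \alpha_n v_n) \ge f_0(\overline{x})$ for large $n$, and since $f_0$ is Hadamard d.d. the quotients $(f_0(\overline{x} + \alpha_n v_n) - f_0(\overline{x}))/\alpha_n$ converge to $f_0'(\overline{x}, v)$, whence $f_0'(\overline{x}, v) \ge 0$. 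Assumptions 1--4 guarantee that the hypotheses of Theorem~\ref{thrm:ContingConeToQuasidiffSet} hold for the given $x_i^*, y_i^*, z_j^*$, so the cone $K$ on the left-hand side of \eqref{ConvexSubcone} satisfies $K \subseteq T_M(\overline{x})$; combining this with the upper estimate $f_0'(\overline{x}, v) \le p_0(v)$ coming from \eqref{eq:QuasidiffDef} (use $\min_{y^*} \langle y^*, v\rangle \le \langle y_0^*, v\rangle$) yields $p_0(v) \ge 0$ for all $v \in K$.

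Writing $p_i(\cdot) = s(\underline{\partial} f_i(\overline{x}) + y_i^*, \cdot)$, $r_i(\cdot) = s(- x_i^* - \overline{\partial} f_i(\overline{x}), \cdot)$ and $u_j(\cdot) = s(\underline{\partial} g_j(\overline{x}) + z_j^*, \cdot)$, the previous paragraph says exactly that $v = 0$ minimizes the continuous sublinear function $p_0$ over the closed convex cone $K = \{ v : p_i(v) \le 0,\ r_i(v) \le 0\ (i \in I),\ u_j(v) \le 0\ (j \in J(\overline{x})) \}$, with optimal value $0$. I would then apply Lagrangian duality to this convex conic program to obtain multipliers $\mu_i^p, \mu_i^r \ge 0$ ($i \in I$) and $\lambda_j \ge 0$ ($j \in J(\overline{x})$) for which the sublinear Lagrangian $L = p_0 + \sum_{i \in I}(\mu_i^p p_i + \mu_i^r r_i) + \sum_{j \in J(\overline{x})} \lambda_j u_j$ is nonnegative on all of $X$. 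The delicate point, and the main obstacle, is the validity of strong duality together with attainment of these multipliers: the ordinary Slater condition is \emph{unavailable}, since $p_i(v) \le 0$ and $r_i(v) \le 0$ can never be made simultaneously strict (they force $f_i'(\overline{x}, v) = 0$). This is precisely where the constraint qualification enters: assumptions \ref{Assumpt_ImplicitMFCQ1}--\ref{Assumpt_ImplicitMFCQ3}, equivalently the non-intersection conditions \eqref{AlmostLI_SubQuasidiff}--\eqref{NonIntersect_SubQuasidiff} of Proposition~\ref{prp:GeometricRegConditions}, act as a Mangasarian--Fromovitz condition (the directions $v_i, w_i, v_0$ keep all constraints admissible while strictly decreasing the relevant one) that rules out degeneracy of the multiplier cone and secures strong duality.

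Once $L \ge 0$ on $X$ is established, the conclusion follows by differentiating at the origin. A continuous sublinear function is nonnegative exactly when $0$ lies in its subdifferential at $0$, and $\partial s(A, \cdot)(0) = A$; since every summand of $L$ is continuous, the Moreau--Rockafellar sum rule identifies $\partial L(0)$ with the Minkowski sum of the underlying weak${}^*$ compact sets. Hence $0 \in (\underline{\partial} f_0(\overline{x}) + y_0^*) + \sum_{i \in I}\big[ \mu_i^p(\underline{\partial} f_i(\overline{x}) + y_i^*) + \mu_i^r(- x_i^* - \overline{\partial} f_i(\overline{x})) \big] + \sum_{j \in J(\overline{x})} \lambda_j(\underline{\partial} g_j(\overline{x}) + z_j^*)$. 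Each bracketed $i$-term is a nonnegative combination of elements of the two pieces of $C_i$, so the whole $i$-sum lies in the convex cone $\cone\{ C_i \mid i \in I \} \subseteq \cl^*\cone\{ C_i \mid i \in I \}$. Finally, setting $\lambda_j = 0$ for $j \notin J(\overline{x})$ establishes the complementary slackness $\lambda_j g_j(\overline{x}) = 0$ for all $j \in J$ and lets the sum run over all of $J$ (the extra terms vanish, so the prescribed $z_j^*$, $j \notin J(\overline{x})$, are harmless), which is exactly \eqref{QuasidiffProg_OptCond_WithCone}.
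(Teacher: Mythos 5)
Your first half is exactly the paper's argument: local optimality plus Hadamard directional differentiability gives $f_0'(\overline{x}, v) \ge 0$ on $T_M(\overline{x})$, Theorem~\ref{thrm:ContingConeToQuasidiffSet} restricts this to the convex cone $K$, and the quasidifferential upper estimate turns it into $p_0(v) \ge 0$ on $K$. The divergence, and the genuine gap, is in how you extract multipliers from this. You set up the convex program with \emph{all} constraints functional ($p_i \le 0$, $r_i \le 0$, $u_j \le 0$), correctly observe that Slater's condition is structurally unavailable (since $p_i(v) < 0$ forces $r_i(v) > 0$), and then assert that assumptions \ref{Assumpt_ImplicitMFCQ1}--\ref{Assumpt_ImplicitMFCQ3} nevertheless ``secure strong duality'' with attainment of the multipliers $\mu_i^p, \mu_i^r$. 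This is not proved, and it is not true in general: the directions $v_i, w_i, v_0$ supplied by the constraint qualification never make $p_i$ and $r_i$ simultaneously negative, so they do not yield any of the standard weakened Slater conditions, and existence of individual multipliers for the equality-derived constraints genuinely requires the weak${}^*$ closedness of $\cone\{ C_i \mid i \in I\}$. Indeed, if your duality step were valid you would have proved the conclusion of Corollary~\ref{crlr:QuasidiffProg_LagrangeMultipliers} \emph{without} its extra closedness hypothesis, which is precisely the hypothesis the paper adds to pass from \eqref{QuasidiffProg_OptCond_WithCone} to \eqref{QuasidiffProg_LagrageMultipliers}.

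The paper avoids this obstacle by a two-tier treatment: only the inequality constraints $u_j(v) \le 0$, $j \in J(\overline{x})$, are kept functional, while the equality-derived inequalities are absorbed into an abstract closed convex cone $H$ appearing as the set constraint $v \in H$. Slater's condition \emph{relative to} $H$ does hold, because assumption~\ref{Assumpt_ImplicitMFCQ3} provides $v_0 \in H$ with $u_j(v_0) < 0$ for all $j \in J(\overline{x})$. The convex KKT theorem then yields $0 \in \partial p_0(0) + \sum_j \lambda_j \partial u_j(0) + H^o$, and a separation argument identifies the polar cone $H^o$ with $\cl^*\cone\{ C_i \mid i \in I\}$ --- which is exactly why the closure appears in \eqref{QuasidiffProg_OptCond_WithCone} in place of your explicit $\mu_i^p, \mu_i^r$ terms. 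To repair your proof you should either adopt this set-constraint formulation or supply an argument that the value function of your fully functional program is subdifferentiable at the origin; as written, the multiplier-existence step does not follow from the stated hypotheses.
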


\begin{proof}
With the use of the definitions of contingent cone and Hadamard directional derivative one can easily verify that 
the local optimality of the point $\overline{x}$ implies that $f_0'(\overline{x}, v) \ge 0$ for any 
$v \in T_M(\overline{x})$, where $M$ is the feasible region of problem \eqref{QuasidiffProgramProblem}
(see~\eqref{QuasidiffSet}). Hence, in particular, $f_0'(\overline{x}, v) \ge 0$ for any $v \in K$, where
\begin{equation} \label{ConvexSubconeContingCone}
\begin{split}
  K = \Big\{ v \in X \Bigm| s\big( \underline{\partial} f_i(\overline{x}) + y_i^*, v \big) \le 0, \enspace
  &s\big( - x_i^* - \overline{\partial} f_i(\overline{x}), v \big) \le 0, \enspace i \in I, \\
  &s\big( \underline{\partial} g_j(\overline{x}) + z_j^*, v \big) \le 0, \enspace j \in J(\overline{x}) \Big\},
\end{split}
\end{equation}
since by Theorem~\ref{thrm:ContingConeToQuasidiffSet} one has $K \subseteq T_M(\overline{x})$. 

Choose any $y_0^* \in \overline{\partial} f_0(\overline{x})$. By the definition of quasidifferential one has
$$
  p(v) = s\big( \underline{\partial} f_0(\overline{x}) + y_0^*, v \big) \ge f_0'(\overline{x}, v)
  \quad \forall v \in X.
$$
Therefore $p(v) \ge 0$ for any $v \in K$, which, as is readily seen, implies that $0$ is a globally optimal solution
of the convex programming problem
\begin{equation} \label{AuxConvexProblem}
  \min \: p(v) \quad \text{s.t.} \quad q_j(v) \le 0 \quad \forall j \in J(\overline{x}), \quad v \in H,
\end{equation}
where $q_j(v) = s(\underline{\partial} g_j(\overline{x}) + z_j^*, v)$ and
$$
  H = \Big\{ v \in X \Bigm| s\big( \underline{\partial} f_i(\overline{x}) + y_i^*, v \big) \le 0, \enspace
  s\big( - x_i^* - \overline{\partial} f_i(\overline{x}), v \big) \le 0, \enspace i \in I \Big\}.
$$
Note that the cone $H$ is obviously closed and convex. By assumption~\ref{Assumpt_ImplicitMFCQ3} of
Theorem~\ref{thrm:ContingConeToQuasidiffSet} there exists $v_0 \in H$ such that $q_j(v_0) < 0$ for any 
$j \in J(\overline{x})$, i.e. Slater's condition for problem~\eqref{AuxConvexProblem} holds true. Consequently, applying
the necessary and sufficient optimality conditions for convex programming problems (see, e.g.
\cite[Theorem~$1.1.2'$]{IoffeTihomirov}) one obtains that there exists $\lambda_j \ge 0$, $j \in J(\overline{x})$, such
that
\begin{equation} \label{OptCond_AuxConvexProblem}
  0 \in \partial p(0) + \sum_{j \in J(\overline{x})} \lambda_j \partial q_j(0) + H^o
\end{equation}
where $H^o = \{ x^* \in X^* \mid \langle x^*, v \rangle \le 0 \: \forall v \in H \}$ is the polar
cone of $H$ and $\partial$ is the subdifferential in the sense of convex analysis. We claim that
\begin{equation} \label{PolarConeViaSubdiff}
  H^o = \cl^* \cone\big\{ C_i \mid i \in I \big\},
\end{equation}
where $C_i = (\underline{\partial} f_i(\overline{x}) + y_i^*) \cup (- x_i^* - \overline{\partial} f_i(\overline{x}))$.
Indeed, the inclusion ``$\supseteq$'' follows directly from the definition of $H$. Arguing by reductio ad
absurdum, suppose that the opposite inclusion does not hold true, i.e. that there exists $x^* \in H^o$ such
that $x^* \notin \cl^* \cone\{ C_i \mid i \in I \}$. Then applying the separation theorem in the space $X^*$
equipped with the weak${}^*$ topology one gets that there exists $v \in X$ such that $\langle x^*, v \rangle > 0$, while
$\langle y^*, v \rangle \le 0$ for any $y^* \in \cl^* \cone\{ C_i \mid i \in I \}$. From the second inequality
it follows that $v \in H$ by the definition of $H$, which is impossible, since 
$x^* \in H^o$ and $\langle x^*, v \rangle > 0$. Thus, \eqref{PolarConeViaSubdiff} holds true. Consequently,
computing the subdifferentials $\partial p(0)$ and $\partial q_j(0)$ with the use of the theorem on 
the subdifferential of the supremum of a family of convex functions (see, e.g. \cite[Theorem~4.2.3]{IoffeTihomirov}),
setting $\lambda_j = 0$ for any $j \notin J(\overline{x})$, and applying \eqref{OptCond_AuxConvexProblem} one obtains
that optimality condition \eqref{QuasidiffProg_OptCond_WithCone} holds true.
\end{proof}

\begin{corollary} \label{crlr:QuasidiffProg_LagrangeMultipliers}
Let all assumptions of the theorem above be valid and suppose that the set $\cone\{ C_i \mid i \in I \}$ is weak${}^*$
closed. Then for any $y_0^* \in \overline{\partial} f_0(\overline{x})$ and 
$z_j^* \in \overline{\partial} g_j(\overline{x})$, $j \notin J(\overline{x})$, there exist $\underline{\mu}_i \ge 0$,
$\overline{\mu}_i \ge 0$, $i \in I$, and $\lambda_j \ge 0$, $j \in J$, such that $\lambda_j g_j(\overline{x}) = 0$ for
any $j \in J$ and
\begin{equation} \label{QuasidiffProg_LagrageMultipliers}
\begin{split}
  0 \in \underline{\partial} f_0(\overline{x}) + y_0^* 
  &+ \sum_{i = 1}^m \underline{\mu}_j \big( \underline{\partial} f_i(\overline{x}) + y_i^* \big) \\
  &- \sum_{i = 1}^m \overline{\mu}_j \big( x_i^* + \overline{\partial} f_i(\overline{x}) \big)
  + \sum_{j = 1}^l \lambda_j \big( \underline{\partial} g_j(\overline{x}) + z_j^* \big).
\end{split}
\end{equation}
\end{corollary}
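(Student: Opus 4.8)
The plan is to deduce the corollary directly from Theorem~\ref{thrm:QuasidiffProg_OptCond} by making the cone $\cl^* \cone\{ C_i \mid i \in I \}$ appearing in \eqref{QuasidiffProg_OptCond_WithCone} fully explicit. First I would observe that, under the additional hypothesis that $\cone\{ C_i \mid i \in I \}$ is weak${}^*$ closed, one has $\cl^* \cone\{ C_i \mid i \in I \} = \cone\{ C_i \mid i \in I \}$, so that the inclusion furnished by the theorem reads
\[
  0 \in \underline{\partial} f_0(\overline{x}) + y_0^*
  + \sum_{j = 1}^l \lambda_j \big( \underline{\partial} g_j(\overline{x}) + z_j^* \big)
  + \cone\big\{ C_i \mid i \in I \big\}
\]
for the very same multipliers $\lambda_j \ge 0$, $j \in J$, with $\lambda_j g_j(\overline{x}) = 0$, already supplied by the theorem. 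It therefore remains only to rewrite the last term in the form prescribed by \eqref{QuasidiffProg_LagrageMultipliers}.

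The key step is a description of the convex conic hull of the union of the sets $C_i$. Writing $A_i = \underline{\partial} f_i(\overline{x}) + y_i^*$ and $B_i = - x_i^* - \overline{\partial} f_i(\overline{x})$, so that $C_i = A_i \cup B_i$, I would establish the identity
\[
  \cone\big\{ C_i \mid i \in I \big\}
  = \Big\{ \sum_{i \in I} \underline{\mu}_i a_i + \sum_{i \in I} \overline{\mu}_i b_i
  \Bigm| a_i \in A_i, \, b_i \in B_i, \, \underline{\mu}_i \ge 0, \, \overline{\mu}_i \ge 0 \Big\}.
\]
The inclusion ``$\supseteq$'' is immediate from the definition of $\cone$. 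For ``$\subseteq$'', any element of $\cone\{ C_i \mid i \in I \}$ is a finite nonnegative combination of points drawn from the various sets $A_i$ and $B_i$; grouping together all the summands that belong to a fixed set $A_i$ and invoking the convexity of $A_i$, their partial sum collapses to $\underline{\mu}_i a_i$ for a single $a_i \in A_i$ and $\underline{\mu}_i \ge 0$ equal to the total weight (the contribution being $0$, with $a_i$ any point of the nonempty set $A_i$, when that weight vanishes), and symmetrically for each $B_i$. Since $\underline{\mu}_i A_i = \underline{\mu}_i\big( \underline{\partial} f_i(\overline{x}) + y_i^* \big)$ and $\overline{\mu}_i B_i = - \overline{\mu}_i\big( x_i^* + \overline{\partial} f_i(\overline{x}) \big)$ by the definitions of $A_i$ and $B_i$, substituting this description of the cone into the displayed inclusion and relabelling the scalars yields precisely \eqref{QuasidiffProg_LagrageMultipliers}.

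I expect the only genuine obstacle to be the grouping argument behind the displayed identity for $\cone\{ C_i \mid i \in I \}$: one must check that an arbitrary conic combination, in which several summands may be taken from the same $A_i$ (or $B_i$) and in arbitrary number, can always be consolidated into exactly one term per set. This is where the convexity of each $A_i = \underline{\partial} f_i(\overline{x}) + y_i^*$ and $B_i = - x_i^* - \overline{\partial} f_i(\overline{x})$, inherited from the convexity of the sub- and superdifferentials, is essential, and it is the reason the multipliers $\underline{\mu}_i$ and $\overline{\mu}_i$ may be taken to be single scalars rather than several per set. Everything else amounts to recalling that the complementarity relations $\lambda_j g_j(\overline{x}) = 0$ carry over verbatim from Theorem~\ref{thrm:QuasidiffProg_OptCond}.
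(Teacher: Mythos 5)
Your proposal is correct and follows essentially the same route as the paper: apply Theorem~\ref{thrm:QuasidiffProgram_OptCond}, use the weak${}^*$ closedness to drop the closure, and then decompose an element of $\cone\{ C_i \mid i \in I \}$ into one conic term per set $A_i$ and per set $B_i$. The paper compresses the grouping step into the phrase ``from the definitions of conic hull and the sets $C_i$,'' whereas you spell out the consolidation via convexity of each $A_i$ and $B_i$; this is the same argument, just made explicit.
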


\begin{proof}
By Theorem~\ref{thrm:QuasidiffProgram_OptCond} there exist $\lambda_j \ge 0$, $j \in J$, and
$x^* \in \underline{\partial} f_0(\overline{x}) + y_0^* + 
\sum_{j \in J} \lambda_j (\underline{\partial} g_j(\overline{x}) + z_j^*)$ such that 
$- x^* \in \cone\{ C_i \mid i \in I \}$ and $\lambda_j g_j(\overline{x}) = 0$ for any $j \in J$. From the definitions of
conic hull and the sets $C_i$ it follows that there exist $\underline{\mu}_i \ge 0$ and 
$\overline{\mu}_i \ge 0$, $i \in I$, such that
$$
  - x^* \in \sum_{i = 1}^m \underline{\mu}_j \big( \underline{\partial} f_i(\overline{x}) + y_i^* \big)
  - \sum_{i = 1}^m \overline{\mu}_j \big( x_i^* + \overline{\partial} f_i(\overline{x}) \big),
$$
i.e. \eqref{QuasidiffProg_LagrageMultipliers} holds true.
\end{proof}

\begin{remark}
Note that each equality constraint $f_i(x) = 0$ enters optimality condition \eqref{QuasidiffProg_LagrageMultipliers}
\textit{twice}, as two inequality constraints, namely, $f_i(x) \ge 0$ and $f_i(x) \le 0$, which seems to be a specific
feature of optimality conditions in terms of quasidifferentials that is connected to the fact that in
quasidifferentiable programming constraints $g(x) \le 0$ and $h(x) \ge 0$ enter optimality conditions differently
(usually, only the sign of the corresponding multiplier changes). Both $\underline{\mu}_i$ and $\overline{\mu}_i$ in
\eqref{QuasidiffProg_LagrageMultipliers} can be viewed as multipliers corresponding to the equality constraint 
$f_i(x) = 0$. Thus, loosely speaking, one can say that there are \textit{two} multipliers $\underline{\mu}_i$ 
and $\overline{\mu}_i$ corresponding to each equality constraint $f_i(x) = 0$. Finally, let us note that Lagrange
multipliers $\lambda_i$, $\underline{\mu}_i$, and $\overline{\mu}_i$ obviously depend on the choice of the vectors
$x_i^*$, $y_i^*$ and $z_j^*$ from the corresponding quasidifferentials of the objective function and constraints and
\textit{cannot} be chosen independently of these vectors in the general case
(cf.~\cite{LudererRosiger90,Luderer92,WangMortensen}).
\end{remark}

Let us point out a simple sufficient condition for the weak${}^*$ closedness of the convex conic hull 
$\cone\{ C_i \mid i \in I \}$ from the corollary above, which is satisfied in almost all finite dimensional
applications. In the finite dimensional case the subdifferentials $\underline{\partial} f_i(\overline{x})$ and the
superdifferentials $\overline{\partial} f_i(\overline{x})$ are usually polytopes (i.e. convex hulls of a finite number
of points). Clearly, one can replace these polytopes in the definition of
$\cone\{ C_i \mid i \in I \}$ with their extreme points, i.e. 
\begin{multline*}
  \cone\{ C_i \mid i \in I \} 
  = \cone\Big\{ x^* \in X^* \Bigm| \\
  x^* \in \extreme\big( \underline{\partial} f_i(\overline{x}) + y_i^* \big) \cup
  \extreme \big( - x_i^* - \overline{\partial} f_i(\overline{x}) \big), i \in I \Big\},
\end{multline*}
where $\extreme A$ is the set of extreme points of a convex set $A$. By the definition of polytope the sets
$\extreme( \underline{\partial} f_i(\overline{x}) + y_i^* )$ and
$\extreme( - x_i^* - \overline{\partial} f_i(\overline{x}) )$ are finite. Thus, if 
the sets $\underline{\partial} f_i(\overline{x})$ and $\overline{\partial} f_i(\overline{x})$, $i \in I$, are polytopes,
then the cone $K = \cone\{ C_i \mid i \in I \}$ is finitely generated and, as is well-known, weak${}^*$ closed (see,
e.g. \cite[Proposition~2.41]{BonnansShapiro}).

In the case when there are no equality constraints one can obtain a slightly stronger result than the one given in
Theorem~\ref{thrm:QuasidiffProgram_OptCond}. For any convex set $A$ denote by
$N_A(x) = \{ x^* \in X^* \mid \langle x^*, y - x \rangle \le 0 \: \forall y \in A \}$ \textit{the normal cone} to the
set $A$ at a point $x \in A$ in the sense of convex analysis.

\begin{theorem} \label{thrm:InequalConstr_OptCond}
Let $\overline{x} \in X$ be a locally optimal solution of the problem
$$
  \min \: f_0(x) \quad \text{s.t.} \quad g_j(x) \le 0 \quad \forall j \in J, \quad x \in A,
$$
where $A \subset X$ is a closed convex set. 
Suppose that the functions $f_0$ and $g_j$, $j \in J$ are quasidifferentiable at $\overline{x}$, and let
$z_j^* \in \overline{\partial} g_j(\overline{x})$, $j \in J$, be such that the following constraint qualification holds
true:
\begin{equation} \label{CQ_QuasidiffInequal}
  0 \notin \co\{ \underline{\partial} g_j(\overline{x}) + z_j^* \mid j \in J(\overline{x}) \} + N_A(\overline{x}).
\end{equation}
Then for any $y_0^* \in \overline{\partial} f_0(\overline{x})$ there exist $\lambda_j \ge 0$, $j \in J$, such that 
\begin{equation} \label{QuasidiffProg_InequalConstr_LMRule}
  0 \in \underline{\partial} f_0(\overline{x}) + y_0^* 
  + \sum_{j = 1}^l \lambda_j \big( \underline{\partial} g_j(\overline{x}) + z_j^* \big) + N_A(\overline{x}),
  \quad \lambda_j g_j(\overline{x}) = 0 \quad \forall j \in J.
\end{equation}
\end{theorem}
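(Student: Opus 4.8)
The plan is to fix an arbitrary $y_0^* \in \overline{\partial} f_0(\overline{x})$, put $p(\cdot) = s(\underline{\partial} f_0(\overline{x}) + y_0^*, \cdot)$ and $q_j(\cdot) = s(\underline{\partial} g_j(\overline{x}) + z_j^*, \cdot)$ for $j \in J(\overline{x})$, and reduce the claim to a convex optimization problem. Since $A$ is closed and convex, the contingent cone satisfies $T_A(\overline{x}) = \cl \cone(A - \overline{x})$ and its polar cone is exactly $N_A(\overline{x})$. I would show that $v = 0$ is a global minimizer of
\begin{equation*}
  \min \: p(v) \quad \text{s.t.} \quad q_j(v) \le 0 \: \forall j \in J(\overline{x}), \quad v \in T_A(\overline{x}),
\end{equation*}
and then read off \eqref{QuasidiffProg_InequalConstr_LMRule} from the convex optimality conditions. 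Note that $p$ and the $q_j$ are continuous sublinear functions (being support functions of weak$^*$ compact convex sets) and that $p(0) = q_j(0) = 0$, so it suffices to prove $p(v) \ge 0$ on the feasible set of the auxiliary problem.

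The core step establishes this inequality on feasible directions. First I would take any $w \in \cone(A - \overline{x})$ with $q_j(w) < 0$ for all $j \in J(\overline{x})$; since $A$ is convex, $\overline{x} + \alpha w \in A$ for all sufficiently small $\alpha > 0$. For $j \in J(\overline{x})$ the estimate $g_j'(\overline{x}, w) \le q_j(w) < 0$ together with $g_j(\overline{x}) = 0$ yields $g_j(\overline{x} + \alpha w) < 0$ for small $\alpha$, while for $j \notin J(\overline{x})$ directional continuity of $g_j$ (a consequence of directional differentiability) and $g_j(\overline{x}) < 0$ give the same conclusion. Hence $\overline{x} + \alpha w$ is feasible for the original problem for all small $\alpha > 0$, and local optimality forces $f_0(\overline{x} + \alpha w) \ge f_0(\overline{x})$, whence $f_0'(\overline{x}, w) \ge 0$ and therefore $p(w) \ge f_0'(\overline{x}, w) \ge 0$ by the definition of quasidifferential. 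It is precisely this direct argument that removes the need for the Hadamard differentiability assumed in Theorem~\ref{thrm:QuasidiffProgram_OptCond}.

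The delicate point is passing from strict to non-strict inequalities and from $\cone(A - \overline{x})$ to its closure $T_A(\overline{x})$. The constraint qualification \eqref{CQ_QuasidiffInequal} asserts that the weak$^*$ compact convex set $\co\{\underline{\partial} g_j(\overline{x}) + z_j^* \mid j \in J(\overline{x})\}$ is disjoint from the weak$^*$ closed convex cone $-N_A(\overline{x})$; separating them in $X^*$ equipped with the weak$^*$ topology (as in the proof of Proposition~\ref{prp:GeometricRegConditions}) produces $v_0 \in (N_A(\overline{x}))^o = T_A(\overline{x})$ with $q_j(v_0) < 0$ for all $j \in J(\overline{x})$, i.e.\ a Slater point. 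Then for any $v \in T_A(\overline{x})$ with $q_j(v) \le 0$ I would set $v_\varepsilon = v + \varepsilon v_0 \in T_A(\overline{x})$, so that sublinearity gives $q_j(v_\varepsilon) \le \varepsilon q_j(v_0) < 0$; approximating $v_\varepsilon$ in norm by elements $w_n \in \cone(A - \overline{x})$ and using continuity of the $q_j$ to guarantee $q_j(w_n) < 0$ for large $n$, the previous step gives $p(w_n) \ge 0$, and continuity of $p$ yields $p(v_\varepsilon) \ge 0$, whence $p(v) \ge 0$ as $\varepsilon \to +0$. Thus $0$ solves the auxiliary problem and Slater's condition holds, which I expect to be the main technical obstacle since it is the only place where the closedness/convexity of $A$ and the exact form of the constraint qualification are both essential.

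Finally, I would invoke the necessary and sufficient optimality conditions for convex programming (as in the proof of Theorem~\ref{thrm:QuasidiffProgram_OptCond}) to obtain multipliers $\lambda_j \ge 0$, $j \in J(\overline{x})$, with
\begin{equation*}
  0 \in \partial p(0) + \sum_{j \in J(\overline{x})} \lambda_j \partial q_j(0) + (T_A(\overline{x}))^o.
\end{equation*}
Since $\partial p(0) = \underline{\partial} f_0(\overline{x}) + y_0^*$, $\partial q_j(0) = \underline{\partial} g_j(\overline{x}) + z_j^*$, and $(T_A(\overline{x}))^o = N_A(\overline{x})$, setting $\lambda_j = 0$ for $j \notin J(\overline{x})$ (which also secures $\lambda_j g_j(\overline{x}) = 0$ for every $j \in J$) yields exactly \eqref{QuasidiffProg_InequalConstr_LMRule}.
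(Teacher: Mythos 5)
Your argument is correct, but it follows a genuinely different route from the paper's. The paper defines the max-function $h(x) = \max_{j \in J}\{f_0(x) - f_0(\overline{x}), g_j(x)\}$, observes that local optimality makes $\overline{x}$ an unconstrained local minimizer of $h$ over $A$, majorizes $h'(\overline{x},\cdot)$ by the convex function $\eta(\cdot) = \max\{s(\underline{\partial} f_0(\overline{x}) + y_0^*,\cdot),\, s(\underline{\partial} g_j(\overline{x}) + z_j^*,\cdot)\}$, and extracts a Fritz John--type inclusion $0 \in \partial\eta(0) + N_A(\overline{x})$ with multipliers $\alpha_0, \alpha_j$ summing to one; the constraint qualification \eqref{CQ_QuasidiffInequal} is invoked only at the very last step to rule out $\alpha_0 = 0$. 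You instead set up the auxiliary problem $\min p(v)$ subject to $q_j(v) \le 0$, $v \in T_A(\overline{x})$, prove that $0$ is its global minimizer by a ray-feasibility argument on $\cone(A - \overline{x})$ combined with a Slater-point perturbation and a norm-density passage to $T_A(\overline{x}) = \cl\cone(A - \overline{x})$, and then apply the convex KKT theorem under Slater's condition; here \eqref{CQ_QuasidiffInequal} is used twice and much earlier (to manufacture the Slater direction $v_0$ via weak$^*$ separation, and again in the convex KKT step). Each approach has something to recommend it: the paper's is shorter, needs no separation or density argument, and yields the Fritz John condition as a CQ-free byproduct; yours is structurally parallel to the proof of Theorem~\ref{thrm:QuasidiffProgram_OptCond} and in effect establishes the inclusion $\{v \in T_A(\overline{x}) \mid q_j(v) \le 0,\ j \in J(\overline{x})\} \subseteq \{v \mid f_0'(\overline{x}, v) \ge 0\}$, i.e. a version of the contingent-cone description of Theorem~\ref{thrm:ContingConeToQuasidiffSet} that accommodates the abstract constraint $x \in A$, which the paper does not state separately. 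All the individual steps you rely on (feasibility of $\overline{x} + \alpha w$ for small $\alpha$ using only ordinary directional differentiability, $(N_A(\overline{x}))^o = T_A(\overline{x})$ by the bipolar theorem for closed convex $A$, norm continuity of the support functions $p$ and $q_j$) check out, so no Hadamard differentiability is needed, consistent with the theorem's hypotheses.
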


\begin{proof}
Define $h(x) = \max_{j \in J} \{ f_0(x) - f_0(\overline{x}), g_j(x) \}$. Applying standard calculus rules for
directional derivatives \cite{DemRub_book} one can check that the function $h$ is d.d. at $\overline{x}$ and
\begin{equation} \label{DirectDeriv_MaxFunc}
  h'(\overline{x}, v) = \max_{j \in J(\overline{x})} \{ f'_0(\overline{x}, v), g_j'(\overline{x}, v) \}
  \quad \forall v \in X.
\end{equation}
It is readily seen that $\overline{x}$ is a point of local minimum of the function $h$ on the set $A - \overline{x}$.
Therefore, $h'(\overline{x}, v) \ge 0$ for any $v \in A - \overline{x}$ due to the convexity of the set $A$. 

Fix any $y_0^* \in \overline{\partial} f_0(\overline{x})$. By the definition of quasidifferential one has
$f_0'(x, v) \le s(\underline{\partial} f_0(\overline{x}) + y_0^*, v)$ and
$g_j'(x, v) \le s(\underline{\partial} g_j(\overline{x}) + z_j^*, v)$ for any $v \in X$ and $j \in J(\overline{x})$.
Hence with the use of \eqref{DirectDeriv_MaxFunc} one gets that
$$
  \eta(v) = \max_{j \in J(\overline{x})} \big\{ s(\underline{\partial} f_0(\overline{x}) + y_0^*, v),
  s(\underline{\partial} g_j(\overline{x}) + z_j^*, v) \big\} \ge h'(\overline{x}, v) \ge 0
$$
for any $v \in A - \overline{x}$, i.e. $0$ is a point of global minimum of the convex function $\eta$ on the set 
$A - \overline{x}$. Therefore, $0 \in \partial \eta(0) + N_A(\overline{x})$ (see, e.g.
\cite[Theorem~1.1.2']{IoffeTihomirov}). Applying the theorem on
the subdifferential of the supremum of a family of convex functions \cite[Theorem~4.2.3]{IoffeTihomirov} one gets that 
$\partial \eta(0) = \co_{j \in J(\overline{x})}
\{ \underline{\partial} f_0(\overline{x}) + y_0^*, \underline{\partial} g_j(\overline{x}) + z_j^* \}$, which implies
that there exist $\alpha_0 \ge 0$ and $\alpha_j \ge 0$, $j \in J(\overline{x})$, such that 
$\alpha_0 + \sum_{j \in J(\overline{x})} \alpha_j = 1$, and
$$
  0 \in \alpha_0 \big( \underline{\partial} f_0(\overline{x}) + y_0^* \big) 
  + \sum_{j \in J(\overline{x})} \alpha_j \big( \underline{\partial} g_j(\overline{x}) + z_j^* \big) 
  + N_A(\overline{x}).
$$
Note that if $\alpha_0 = 0$, then 
$0 \in \co\{ \underline{\partial} g_j(\overline{x}) + z_j^* \mid j \in J(\overline{x}) \} + N_A(\overline{x})$, which
contradicts \eqref{CQ_QuasidiffInequal}. Thus, $\alpha_0 \ne 0$. Hence dividing the inclusion above by $\alpha_0$ one
obtains that \eqref{QuasidiffProg_InequalConstr_LMRule} holds true with $\lambda_j = \alpha_j / \alpha_0$ for any 
$j \in J(\overline{x})$ and $\lambda_j = 0$ for any $j \notin J(\overline{x})$.
\end{proof}

Let us present two simple examples that illustrate Theorems~\ref{thrm:QuasidiffProgram_OptCond} and
\ref{thrm:InequalConstr_OptCond} and, at the same time, demonstrate that optimality conditions in terms of
quasidifferentials are sometimes better than optimality conditions in terms of various subdifferentials. In these
examples we consider optimization problems without equality constraints. A similar example of an equality constrained
problem is given in \cite{Dolgopolik_MetricReg}.

\begin{example}
Firstly we analyze a problem with a degenerate constraint. Let $X = \mathbb{R}$, and consider the following
optimization problem:
\begin{equation} \label{Example_DegenInequalConstr}
  \min \: f_0(x) = x \quad \text{s.t.} \quad g(x) = \min\{ x, x^3 \} \le 0.
\end{equation}
The point $\overline{x} = 0$ is obviously not a locally optimal solution of this problem, since the set $(- \infty, 0]$
is a feasible region of this problem. However, let us check that optimality conditions in terms of various
subdifferentials hold true at $\overline{x}$.

Denote by $L(x, \lambda_0, \lambda) = \lambda_0 f_0(x) + \lambda g(x)$ the Lagrangian for problem
\eqref{Example_DegenInequalConstr}. It is easy to see that $\partial_{Cl} L(\overline{x}, 0, \lambda) = [0, \lambda]$
for any $\lambda > 0$, where $\partial_{Cl}$ is the Clarke subdifferential. Thus, 
$0 \in \partial_{Cl} L(\overline{x}, 0, \lambda)$, i.e. the optimality conditions in terms of the Clarke subdifferential
\cite[Theorem~6.1.1]{Clarke} are satisfied at $\overline{x}$.

Let us now consider the Michel-Penot subdifferential \cite{Ioffe93}. Fix any $\lambda > 0$. For any $v \in \mathbb{R}$
the Michel-Penot directional derivative of the function $L(\cdot, 0, \lambda)$ at $\overline{x}$ has the form
\begin{align*}
  d_{MP} L(\cdot, 0, \lambda)[\overline{x}, v] 
  &= \sup_{e \in \mathbb{R}} \limsup_{t \to +0} 
  \frac{L(\overline{x} + t (v + e), 0, \lambda) - L(\overline{x} + t e, 0, \lambda)}{t} \\
  &= \sup_{e \in \mathbb{R}} \lambda \big( \min\big\{ v + e, 0 \big\} - \min\{ e, 0 \} \big) 
  = \lambda \max\{ 0, v \}.
\end{align*}
Consequently, $\partial_{MP} L(\cdot, 0, \lambda)(\overline{x}) = [0, \lambda]$, where $\partial_{MP}$ is the
Michel-Penot sub\-differential of $L(\cdot, 0, \lambda)$ at $\overline{x}$. Thus, 
$0 \in \partial_{MP} L(\cdot, 0, \lambda)(\overline{x})$, i.e. the optimality conditions in terms of the Michel-Penot
subdifferential \cite{Ioffe93} are satisfied at $\overline{x}$.

Let us now turn to approximate (Ioffe) subdifferential \cite{Ioffe84,Ioffe2012,Penot_book}. Observe that for any 
$x \in (0, 1)$ one has $L(x, 0, \lambda) = \lambda x^3$. Therefore, for any such $x$ one has
$\partial^-L(\cdot, 0, \lambda)(x) = 3 \lambda x^2$, where $\partial^-$ is the Dini subdifferential. Hence for the
Ioffe subdifferential one has 
$0 \in \partial_a L(\cdot, 0, \lambda)(\overline{x}) = \limsup_{x \to \overline{x}} \partial^-L(\cdot, 0, \lambda)(x)$,
where $\limsup$ is the outer limit. Thus, the optimality conditions in terms of the Ioffe subdifferential
\cite[Proposition~12]{Ioffe84} are satisfied at $\overline{x}$ for any $\lambda > 0$. 

Denote by $\partial_M$ the Mordukhovich basic subdifferential \cite{Mordukhovich_I,Mordukhovich_II}. With the use of
the representation of this subdifferential as the limiting Fr\'{e}chet subdifferential
\cite[Theorem~1.89]{Mordukhovich_I} one can easily check that 
$\partial_M (\lambda g)(\overline{x}) = \{ 0, \lambda \}$. Consequently, 
$- \lambda_0 \nabla f_0(\overline{x}) \in \partial_M (\lambda g)(\overline{x})$ for $\lambda_0 = 0$ and any 
$\lambda > 0$, i.e. the optimality conditions in terms of the Mordukhovich subdifferential 
\cite[Theorem~5.19]{Mordukhovich_II} are satisfied at $\overline{x}$ as well.

Let us now consider the Jeyakumar-Luc subdifferential \cite{WangJeyakumar}, which we denote by $\partial_{JL}$. One can
check that $\partial_{JL} g(\overline{x}) = \{ 0, 1 \}$ is the smallest Jeykumar-Luc subdifferential of $g$ at
$\overline{x}$. For any $\lambda > 0$ and $\lambda_0 = 0$ one has 
$0 \in \lambda_0 \nabla f_0(\overline{x}) + \lambda \co \partial_{JL} g(\overline{x})$. Thus, the optimality conditions
in terms of the Jeyakumar-Luc subdifferential \cite[Corollary~3.4]{WangJeyakumar} are satisfied at $\overline{x}$.

Let us finally check that optimality conditions in terms of quasidifferentials
(Theorem~\ref{thrm:InequalConstr_OptCond}), in contrast to optimality conditions in terms of subdifferentials,
detect the nonoptimality of the point $\overline{x} = 0$. Indeed, the function $g$ is obviously quasidifferentiable at
$\overline{x}$ and one can define $\mathscr{D} g(\overline{x}) = [ \{ 0 \}, [0, 1] ]$. Note that for 
$z^* = 1 \in \overline{\partial} g(\overline{x})$ one obviously has 
$0 \notin \underline{\partial} g(\overline{x}) + z^*$, i.e. the assumptions of
Theorem~\ref{thrm:InequalConstr_OptCond} are satisfied. Therefore, if $\overline{x}$ is a locally optimal
solution of problem \eqref{Example_DegenInequalConstr}, then by Theorem~\ref{thrm:InequalConstr_OptCond}
there exists $\lambda \ge 0$ such that 
$$
  0 \in \nabla f_0(\overline{x}) + \lambda \big( \underline{\partial} g(\overline{x}) + z^* \big) 
  = 1 + \lambda 1 = 1 + \lambda,
$$
which is clearly impossible. Thus, one can conclude that the point $\overline{x}$ is nonoptimal. 
\end{example}

\begin{remark} \label{remark:InEqConstr_Diff}
Various optimality conditions and constraint qualifications for quasidifferentiable programming problems with
inequality constraints were analyzed in
\cite{DemyanovPolyakova80,Ward91,LudererRosiger91,KuntzScholtes,KuntzScholtes92,Dolgopolik_MetricReg}. One can
check that none of the constraint qualifications from these papers are satisfied for 
problem~\eqref{Example_DegenInequalConstr} at the point  $\overline{x} = 0$. Moreover, the so-called 
\textit{nondegeneracy condition}
$$
  \cl \{ v \in X \mid g'(\overline{x}, v) < 0 \} = \{ v \in X \mid g'(\overline{x}, v) \le 0 \}
$$
does not hold true at $\overline{x}$ either. Thus, it seems that in the case of quasidifferentiable programming problems
constraint qualifications must depend on individual elements of quasidifferentials just like Lagrange multipliers in
quasidifferentiable programming depend on individual elements of quasidifferentials. To the best of the author's
knowledge (and much to the author's surprise), such constraint qualifications have never been analyzed before.
\end{remark}

\begin{example}
Let us also consider a nondegenerate problem. Let $X = \mathbb{R}^2$ and consider the following
optimization problem:
\begin{equation} \label{Example_DC_InequalConstr}
  \min \: f_0(x) = |x^1| - |x^2| \quad \text{s.t.} \quad g(x) = - x^1 + x^2 \le 0.
\end{equation}
The point $\overline{x} = 0$ is not a locally optimal solution of this problem, since for any $t > 0$ the point 
$x(t) = (t, - 2t)$ is feasible for this problem and the inequality $f_0(x(t)) = - t < 0 = f_0(\overline{x})$ holds true.

Denote by $L(x, \lambda) = f_0(x) + \lambda g(x)$ the Lagrangian for problem \eqref{Example_DC_InequalConstr}.
One can easily check that
$$
  \partial_{MP} L(\cdot, \lambda)(\overline{x}) = \partial_{Cl} L(\cdot, \lambda)(\overline{x})
  = \co\left\{ \begin{pmatrix} 1 \\ 1 \end{pmatrix}, \begin{pmatrix} 1 \\ -1 \end{pmatrix},
  \begin{pmatrix} - 1 \\ 1 \end{pmatrix}, \begin{pmatrix} - 1 \\ -1 \end{pmatrix} \right\} 
  + \lambda \begin{pmatrix} - 1 \\ 1 \end{pmatrix}.
$$
Therefore optimality conditions in terms of the Michel-Penot and Clarke subdifferentials are satisfied at
$\overline{x}$ for any $\lambda \in [0, 1]$. By \cite[Example~2.1]{WangJeyakumar} one can set 
$\partial_{JL} f_0(\overline{x}) = \{ (1, -1)^T, (-1, 1)^T \}$, which implies that 
$0 \in \co \partial_{JL} f_0(\overline{x}) + \lambda \nabla g(\overline{x})$ for $\lambda = 1$, i.e. the optimality
conditions in terms of the Jeyakumar-Luc subdifferential are satisfied at $\overline{x}$ as well.

By \cite[p.~92--93]{Mordukhovich_I} one has
$\partial_M f_0(\overline{x}) = \co\{ (\pm 1, -1)^T \} \cup \co\{ (\pm 1, 1)^T \}$. Therefore
$0 \in \partial_M f_0(\overline{x}) + \lambda \nabla g(\overline{x})$ for $\lambda = 1$, i.e. the optimality conditions
in terms of the Mordukhovich basic subdifferential are satisfied at $\overline{x}$. Finally, for any 
$x \in \mathbb{R}^2$ such that $x^1, x^2 > 0$ one has $L(x, 1) = 0$, which implies that
$\partial^- L(\cdot, 1)(x) = \{ 0 \}$ for any such $x$. Hence 
$0 \in \partial_a L(\cdot, 1)(\overline{x}) = \limsup_{x \to \overline{x}} \partial^- L(x, 1)$, i.e. the optimality
conditions in term of Ioffe's approximate subdifferential are satisfied at $\overline{x}$ as well.

Let us now consider optimality conditions in terms of quasidifferentials. The function $f_0$ is 
quasidifferentiable at $\overline{x}$ and one can define
$$
  \underline{\partial} f_0(\overline{x}) = \co \left\{ \begin{pmatrix} 1 \\ 0 \end{pmatrix},
  \begin{pmatrix} - 1 \\ 0 \end{pmatrix} \right\}, \quad
  \overline{\partial} f_0(\overline{x}) = \co \left\{ \begin{pmatrix} 0 \\ 1 \end{pmatrix},
  \begin{pmatrix} 0 \\ -1 \end{pmatrix} \right\}. 
$$
For $y_0^* = (0, 1)^T \in \overline{\partial} f_0(\overline{x})$ one has 
$\underline{\partial} f_0(\overline{x}) + y_0^* = \co\{ (1, 1)^T, (-1, 1)^T \}$. Therefore, 
$0 \notin \underline{\partial} f_0(\overline{x}) + y_0^* + \lambda \nabla g(\overline{x})$ for any $\lambda \ge 0$.
Consequently, the optimality conditions from Theorem~\ref{thrm:InequalConstr_OptCond} are not satisfied at
$\overline{x}$, and one can conclude that the point $\overline{x}$ is nonoptimal, since the constraint qualification
$\nabla g(\overline{x}) = (-1, 1)^T \ne 0$ holds true at $\overline{x}$. Thus, unlike optimality conditions in terms of
subdifferentials, the optimality conditions in terms of quasidifferentials are able to detect the nonoptimality of this
point.
\end{example}

\begin{remark}
Let $X = \mathbb{R}^n$ and ``$\partial$'' be any subdifferential mapping that satisfies the following assumption: if
a function $f \colon \mathbb{R}^n \to \mathbb{R}$ is continuously differentiable at a sequence of points 
$\{ x_n \} \subset \mathbb{R}^n$ converging to some $x \in \mathbb{R}^n$ and there exists the limit 
$v = \lim_{n \to \infty} \nabla f(x_n)$, then $v \in \partial f(x)$. Then in the previous example one has 
$0 \in \partial L(\cdot, 1)(\overline{x})$ and $0 \in \partial f_0(\overline{x}) + \nabla g(\overline{x})$ due to our
assumption on ``$\partial$'' and the fact that for any $x \in \mathbb{R}^2$ such that $x^1, x^2 > 0$ one has 
$L(x, 1) = 0$, i.e. $\nabla_x L(x, 1) = 0$, and $\nabla f_0(x) = (1, -1)^T$. Thus, roughly speaking, no outer
semicontinuous/limiting subdifferential can detect the nonoptimality of the point $\overline{x}$ in the previous
example.
\end{remark}

\section{A comparison of constraint qualifications}
\label{Section_CQ_Comparison}

As was pointed out in the introduction, numerous papers have been devoted to analysis of constraint qualifications and
optimality conditions for nonsmooth quasidifferentiable programming problems with equality and inequality constraints.
Therefore, it is necessary to point out the difference between the main results of this paper and previous studies. 

Let functions $f_0, g \colon X \to \mathbb{R}$ be quasidifferentiable at a point $\overline{x}$ satisfying the
inequality $g(\overline{x}) \le 0$. Consider the following optimization problem:
$$
  \min \: f_0(x) \quad \text{subject to} \quad g(x) \le 0.
$$
A detailed analysis of constraint qualifications in terms of quasidifferentials for this problem was presented in
\cite{KuntzScholtes}. The most widely used constraint qualification for such problems is the 
\textit{nondegenracy condition}  
$$
  \cl \{ v \in X \mid g'(\overline{x}, v) < 0 \} = \{ v \in X \mid g'(\overline{x}, v) \le 0 \},
$$
which was first utilized in quasidifferentiable optimization by Demyanov and Polyakova \cite{DemyanovPolyakova80}. As
was pointed out in \cite{KuntzScholtes}, ``for a given problem it is usually hard if not impossible to verify the
nondegeneracy condition''. Therefore, different constraint qualifications are needed. In \cite{KuntzScholtes} it was
shown that the strongest constraint qualification among existing ones in terms of quasidifferentials is the assumption
that the pair $(\underline{\partial} g(\overline{x}), - \overline{\partial} g(\overline{x}))$ is \textit{in general
position}, in the sense that the validity of \textit{all} other existing constraints qualifications implies that the
pair $(\underline{\partial} g(\overline{x}), - \overline{\partial} g(\overline{x}))$ is in general position. This
assumption was introduced by Rubinov, and it is invariant with respect to the choice of quasidifferential (see, e.g.
\cite{DemRub_book,Quasidifferentiability_book}). Recall that a pair $[A, B]$ of weak${}^*$ compact convex subsets of
$X^*$ is said to be in \textit{general position}, if for any $v \in X$  \textit{the max-face} 
$$
  \Delta(v \mid B) = \{ y^* \in B \mid \langle y^*, v \rangle = s(B, v) \}
$$
is \textit{not} contained in the max-face 
$$
  \Delta(v \mid A) = \{ x^* \in A \mid \langle x^*, v \rangle = s(A, v) \}. 
$$
If the pair $(\underline{\partial} g(\overline{x}), - \overline{\partial} g(\overline{x}))$ is in general position, then
by definition the max-face 
$\Delta(0 \mid - \overline{\partial} g(\overline{x})) = - \overline{\partial} g(\overline{x})$ corresponding to the
vector $v = 0$ is not contained in the max-face
$\Delta(0 \mid \underline{\partial} g(\overline{x})) = \underline{\partial} g(\overline{x})$. Therefore, there exists
$z^* \in \overline{\partial} g(\overline{x})$ such that $0 \notin \underline{\partial} g(\overline{x}) + z^*$, that is,
constraint qualification \eqref{CQ_QuasidiffInequal} from Theorem~\ref{thrm:InequalConstr_OptCond} is satisfied for
some vectors $z^* \in \overline{\partial} g(\overline{x})$. However, in many particular cases this constraint
qualification can be satisfied, when 
the pair $(\underline{\partial} g(\overline{x}), - \overline{\partial} g(\overline{x}))$ is not in general position.

\begin{example} \label{example:CQ_ne_GeneralPosition}
Let $X = \mathbb{R}^2$ and $g(x) = \max\{ 2 x^1, 2 x^2 \} + \min\{ 0, - x^1 - x^2 \}$. This function is
quasidifferentiable and its quasidifferential at the point $\overline{x} = 0$ has the form
$$
  \underline{\partial} g(0) = \co\left\{ \begin{pmatrix} 2 \\ 0 \end{pmatrix}, 
  \begin{pmatrix} 0 \\ 2 \end{pmatrix} \right\}, \quad
  \overline{\partial} g(0) = \co\left\{ \begin{pmatrix} 0 \\ 0 \end{pmatrix}, 
  \begin{pmatrix} -1 \\ -1 \end{pmatrix} \right\}
$$
For $v = (1, 1)^T$ one has $\Delta(v \mid - \overline{\partial} g(0)) = \{ (1, 1)^T) \}$ and
$\Delta(v \mid \underline{\partial} g(0)) = \underline{\partial} g(0)$,  which implies that the pair 
$(\underline{\partial} g(0), - \overline{\partial} g(0))$ is not in general position, since
$(1, 1)^T \in \underline{\partial} g(0)$. On the other hand, for any 
$z^* \in \overline{\partial} g(0) \setminus \{ (-1, -1)^T \}$ one has
$0 \notin \underline{\partial} g(0) + z^*$.
\end{example}

Let us now consider the equality constrained problem
$$
  \min \: f_0(x) \quad \text{subject to} \quad f_1(x) = 0.
$$
Optimality condition \eqref{QuasidiffProg_OptCond_WithCone} for this problem was first derived by Polyakova
\cite{Polyakova86} under the assumption that $T_M(\overline{x}) = \{ v \in X \mid f'_1(\overline{x}, v) = 0 \}$, where
by definition $M = \{ x \in X \mid f_1(x) = 0 \}$. Furthermore, it was shown in \cite{Polyakova86} that this assumptions
is satisfied, provided $0 \notin [\mathscr{D} f_1(\overline{x})]^+$, i.e. provided q.d.-MFCQ holds at $\overline{x}$.
Note that the constraint qualification from Theorem~\ref{thrm:ContingConeToQuasidiffSet} (see also
Corollary~\ref{crlr:ContingCone_EqualConstr}) that we use is much less restrictive than q.d.-MFCQ. In  particular,
q.d.-MFCQ is not satisfies for the constraint $f_1(x) = |\sin x^1| - |\sin x^2| = 0$ at the point $\overline{x} = 0$,
while the constraint qualification from Theorem~\ref{thrm:ContingConeToQuasidiffSet} is satisfied for many particular
elements of a quasidifferential of $f_1$ (see~Example~\ref{example:CrossEqConstr}). 

In turn, if q.d.-MFCQ is not satisfied, then, unlike the constraint qualification from
Theorem~\ref{thrm:ContingConeToQuasidiffSet}, 
the assumption $T_M(\overline{x}) = \{ v \in X \mid f'_1(\overline{x}, v) = 0 \}$ is hard to verify directly without
employing some additional information about the function $f_1$ apart from its quasidifferential at $\overline{x}$. In
addition, there are cases when this assumption is not satisfied, while the constraint qualification from
Theorem~\ref{thrm:ContingConeToQuasidiffSet} can be applied. 

\begin{example} \label{example:KernelDirectDeriv_ne_ContingCone}
Let $X = \mathbb{R}^2$ and $$
  f_1(x) = \max\{ \sin x^1 + \sin x^2, 0 \} + \min\{ - x^1 - x^2, x^1 \}.
$$
Then for the point $\overline{x} = 0$ 
one has $f_1'(\overline{x}, v) = \max\{ v^1 + v^2, 0 \} + \min\{ - v^1 - v^2, v^1 \}$, which yields
$$
  K = \big\{ v = (v^1, v^2) \in \mathbb{R}^2 \bigm| v^1, v^2 > 0 \big\} \subset 
  \big\{ v \in X \bigm| f'_1(\overline{x}, v) = 0 \big\}. 
$$
However, from the fact that $t > \sin t$ for all $t > 0$ it follows that for any $x \in (0, \pi) \times (0, \pi)$ one
has $f_1(x) = \sin x^1 + \sin x^2 - x^1 - x^2 < 0$ , which implies that $K \cap T_M(\overline{x}) = \emptyset$, that is,
$T_M(\overline{x}) \ne \{ v \in \mathbb{R}^2 \mid f'_1(\overline{x}, v) = 0 \}$. On the other hand, applying 
the standard rules of the quasidifferential calculus (see, e.g. \cite{DemRub_book}) one can check that $f_1$ is
quasidifferentiable at $\overline{x}$ and one 
can define $\mathscr{D} f_1(0) = [\underline{\partial} f_1(0), \overline{\partial} f_1(0)]$ with
$$
  \underline{\partial} f_1(0) = \co\left\{ \begin{pmatrix} 1 \\ 1 \end{pmatrix}, 
  \begin{pmatrix} 0 \\ 0 \end{pmatrix} \right\}, \quad
  \overline{\partial} f_1(0) = \co\left\{ \begin{pmatrix} -1 \\ -1 \end{pmatrix},
  \begin{pmatrix} 1 \\ 0 \end{pmatrix} \right\}.
$$
Observe that for $x^* = (0, 0)^T \in \underline{\partial} f_1(0)$ one has $0 \notin x^* + \overline{\partial} f_1(0)$
and for $y^* = (1, 0)^T \in \overline{\partial} f_1(0)$ one has $0 \notin \underline{\partial} f_1(0) + y^*$, i.e. the
constraint qualification from Theorem~\ref{thrm:ContingConeToQuasidiffSet} holds true at $\overline{x} = 0$
(see Corollary~\ref{crlr:ContingCone_EqualConstr}).
\end{example}

In \cite{DemRub_book,Quasidifferentiability_book} it was shown that the condition
$T_M(\overline{x}) = \{ v \in X \mid f'_1(\overline{x}, v) = 0 \}$ is satisfied, if both pairs
$(\underline{\partial} f_1(\overline{x}), - \overline{\partial} f_1(\overline{x}))$ and
$(\overline{\partial} f_1(\overline{x}), - \underline{\partial} f_1(\overline{x}))$ are in general position. 
Putting $v = 0$ in the definition of the general position one obtains that there exists 
$y^* \in \overline{\partial} f_1(\overline{x})$ such that $0 \notin \underline{\partial} f_1(\overline{x}) + y^*$ and
there exists $x^* \in \underline{\partial} f_1(\overline{x})$ such that 
$0 \notin x^* + \overline{\partial} f_1(\overline{x})$, that is, the constraint qualification from
Theorem~\ref{thrm:ContingConeToQuasidiffSet} is satisfied at $\overline{x}$ for some elements of the quasidifferential
of $f_1$ at $\overline{x}$. However, as in the case of inequality constraint, the opposite implication does not hold
true. In many particular cases the constraint qualification from Theorem~\ref{thrm:ContingConeToQuasidiffSet} is
satisfied for some elements of a quasidifferential of $f_1$ at $\overline{x}$, while one of 
the pairs $(\underline{\partial} f_1(\overline{x}), - \overline{\partial} f_1(\overline{x}))$ or
$(\overline{\partial} f_1(\overline{x}), - \underline{\partial} f_1(\overline{x}))$ is not in general position 
(take the function $f_1(x) = \max\{ 2 x^1, 2 x^2 \} + \min\{ 0, - x^1 - x^2 \}$ from
Example~\ref{example:CQ_ne_GeneralPosition}).

Optimality conditions for the more general problem
$$
  \min \: f_0(x) \quad \text{subject to} \quad F(x) = 0,
$$
where the mapping $F \colon X \to Y$ is \textit{scalarly quasidifferentiable} in a neighbourhood of $\overline{x}$,
similar to optimality condition \eqref{QuasidiffProg_OptCond_WithCone}, were studied by Uderzo \cite{Uderzo2,Uderzo2007}
in the case when a Banach space $Y$ admits a Fr\'{e}chet smooth renorming and a quasidifferential of $F$ satisfies
certain conditions \textit{in a neighbourhood of} $\overline{x}$ ensuring its metric regularity near this point. In
contrast, our conditions are formulated in terms of quasidifferentials at the point $\overline{x}$ itself and they can
be satisfied even if the equality constraints are not metrically regular near $\overline{x}$ (for example, the function
$f_1(x) = \max\{ \sin x^1 + \sin x^2, 0 \} + \min\{ - x^1 - x^2, x^1 \}$ from
Example~\ref{example:KernelDirectDeriv_ne_ContingCone} is not metrically regular near $\overline{x} = 0$).

Optimality conditions for the general problem
$$
  \min \: f_0(x) \quad \text{s.t.} \quad f_i(x) = 0 \quad \forall i \in I, \quad
  g_j(x) \le 0 \quad \forall j \in J,
$$
similar to \eqref{QuasidiffProg_LagrageMultipliers} were first obtained by Shapiro \cite{Shapiro84,Shapiro86} in the
case $X = \mathbb{R}^n$ under the assumption that for any $v \ne 0$ satisfying the equality
$f'_i(\overline{x}, v) = 0$ for all $i \in I$ the max-faces $\Delta(v \mid \underline{\partial} f_i(\overline{x}))$ and
$\Delta(v \mid \overline{\partial} f_i(\overline{x}))$ are singletones and the vectors $x_i^* - y_i^*$ with 
$\{ x_i^* \} = \Delta(v \mid \underline{\partial} f_i(\overline{x}))$ and
$\{ y_i^* \} = \Delta(v \mid \overline{\partial} f_i(\overline{x}))$, $i \in I$, are linearly independent. Observe that
this assumption is very hard to verify in nontrivial cases, since it requires the computation of the entire set
$\{ v \ne  0 \mid f'_i(\overline{x}, v) = 0 \: \forall i \in I \}$ and all corresponding max-faces. Furthermore, this
assumption is not satisfied in many particular cases. 

\begin{example}
Let $X = \mathbb{R}^2$ and $f_1(x) = \max\{ |x^2|, |x^2| - 2 x^1 \} + \min\{ x^1, 2 x^2 \}$. The
function $f_1$ is quasidifferentiable at the point $\overline{x} = 0$ and one can define 
$\mathscr{D} f_1(0) = [\underline{\partial} f_1(0), \overline{\partial} f_1(0)]$ with
$$
  \underline{\partial} f_1(0) = \co\left\{ \begin{pmatrix} 0 \\ 1 \end{pmatrix}, 
  \begin{pmatrix} 0 \\ -1 \end{pmatrix}, \begin{pmatrix} -2 \\ 1 \end{pmatrix},
  \begin{pmatrix} -2 \\ -1 \end{pmatrix} \right\} \quad
  \overline{\partial} f_1(0) = \co\left\{ \begin{pmatrix} 1 \\ 0 \end{pmatrix},
  \begin{pmatrix} 0 \\ 2 \end{pmatrix} \right\}.
$$
Note that for $v = (1, 0)^T$ one has $f'_1(\overline{x}, v) = 0$, but the max-face
$\Delta(v \mid \underline{\partial} f_i(\overline{x})) = \co\{ (0, \pm 1)^T \}$ is not a singleton, that is, the
constraint qualification from \cite{Shapiro84,Shapiro86} is not satisfied. On the other hand, 
for $y^* = (0, 2)^T \in \overline{\partial} f_1(\overline{x})$ 
one has $0 \notin \underline{\partial} f_1(\overline{x}) + y^*$ and 
for $x^* = 0 \in \underline{\partial} f_1(\overline{x})$ 
one has $0 \notin x^* + \overline{\partial} f_1(\overline{x})$, i.e. the constraint qualification 
from Theorem~\ref{thrm:ContingConeToQuasidiffSet} is satisfied
(see Corollary~\ref{crlr:ContingCone_EqualConstr}).
\end{example}

Finally, optimality conditions \eqref{QuasidiffProg_LagrageMultipliers} were first obtained by the author in
\cite{Dolgopolik_MetricReg} under significantly more restrictive assumptions than in
Corollary~\ref{crlr:QuasidiffProg_LagrangeMultipliers}. Namely, in \cite{Dolgopolik_MetricReg} it was assumed that the
functions $f_i$ and $g_j$ are (in some sense) semicontinuously quasidifferentiable in a neighbourhood of $\overline{x}$
and a weak q.d.-MFCQ holds at $\overline{x}$ (see Remark~\ref{rmrk:qdMFCQ}). As was noted above, the constraint
qualification that we use in this paper is much less restrictive than q.d.-MFCQ. Furthermore, in
Theorem~\ref{thrm:QuasidiffProgram_OptCond} and Corollary~\ref{crlr:QuasidiffProg_LagrangeMultipliers} we assume that
all functions are quasidifferentiable only at the point $\overline{x}$ and do not impose any assumptions on a
semicontinuity of the corresponding quasidifferential mappings.

\section{Conclusions}

In this paper, we presented a new description of convex subcones of the contingent cone to a set defined by
quasidifferentiable equality and inequality constraints. This description is based on the use of individual elements of
quasidifferentials of constraints and was inspired by the works of Di et al. \cite{Di1,Di2} on the derivation of
the classical KKT optimality conditions under weaker assumptions. Furthermore, the description of convex subcones
of the contingent cone provides one with a natural constraint qualification for nonsmooth mathematical programming
problems in terms of quasidifferentials and allows one to derive, apparently, the strongest quasidifferential-based
optimality conditions for such problems under the weakest possible assumptions. See \cite{Dolgopolik_ConstrainedCalcVar}
for applications of these constraint qualification and optimality conditions to constrained nonsmooth problems of the
calculus of variations.

The examples given in the end of the paper demonstrate that our constraint qualification can be satisfied for seemingly
degenerate problems, for which other constraint qualifications in terms of quasidifferentials fail. Furthermore, they
demonstrate that in some cases optimality conditions in terms of quasidifferentials are superior to the ones in terms of
various subdifferentials, since they are able to detect the nonoptimality of a given point, when optimality conditions
based on various subdifferentials fail to do so.

It should be noted that neither the description of convex subcones nor the constraint qualification and optimality
conditions presented in this paper are invariant under the choice of corresponding quasidifferentials. The invariance of
constraint qualifications, optimality conditions, descent directions etc. on the choice of quasidifferentials
has attracted a considerable attention of researchers (see, e.g. \cite{LudererRosiger91,Luderer92,WangMortensen});
however, it seems that non-invariant conditions depending on individual elements of quasidifferentials can lead to
stronger results.


\bibliographystyle{abbrv}  
\bibliography{OptimCond_bibl}

\end{document}